\title{Does weak quasi-o-minimality behave better than weak o-minimality?}
\author[S.\ Moconja]{Slavko Moconja}
\address[S.\ Moconja]{University of Belgrade\\ Faculty of Mathematics\\ Studentski trg 16, 11000 Belgrade, Serbia\and Instytut Matematyczny\\ Uniwersytet Wroc\l{}awski\\ pl.\ Grunwaldzki 2/4, 50-384 Wroc\l{}aw, Poland}
\email[S.\ Moconja]{slavko@matf.bg.ac.rs}
\thanks{Data sharing not applicable to this article as no datasets were generated or analysed during the current study.}
\thanks{The first author was supported by the Narodowe Centrum Nauki grant no.\ 2016/22/E/ST1/00450, and by the Ministry of Education, Science and Technological Development of Serbia through University of Belgrade, Faculty of Mathematics.}
\author[P.\ Tanovi\'c]{Predrag Tanovi\'c}
\address[P.\ Tanovi\'c]{Mathematical Institute SANU\\ Knez Mihailova 36, Belgrade, Serbia}
\email[P.\ Tanovi\'c]{tane@mi.sanu.ac.rs}
\thanks{The second author was supported by the Ministry of Education, Science and Technological Development of Serbia through Mathematical Institute of the Serbian Academy of Sciences and Arts.}
\newtheorem{Theorem}{Theorem}[section]
\newtheorem{Proposition}[Theorem]{Proposition}
\newtheorem{Lemma}[Theorem]{Lemma}
\newtheorem{Corollary}[Theorem]{Corollary}
\newtheorem*{Theorem*}{Theorem}
\newtheorem{TheoremI}{Theorem}
\theoremstyle{definition}
\newtheorem{Definition}[Theorem]{Definition}
\newtheorem{Fact}[Theorem]{Fact}
\newtheorem{Example}[Theorem]{Example}
\newtheorem{Remark}[Theorem]{Remark}
\newtheorem{Question}[Theorem]{Question}
\newtheorem{Claim}{Claim}
\def\endclaim{\renewcommand{\qed}{\hfill$\checkmark$}}
\def\forces{\vdash}
\def\Mon{{\mathfrak C}}
\def\B{{\mathcal B}}
\DeclareMathOperator{\tp}{tp}
\renewcommand{\leq}{\leqslant}
\renewcommand{\geq}{\geqslant}
\def\peq{\preccurlyeq}
\def\seq{\succcurlyeq}
\begin{document}

\begin{abstract}
We present a relatively simple description of binary, definable subsets of models of weakly quasi-o-minimal theories. In particular,  we closely describe definable linear orders and prove a weak version of the monotonicity theorem. We also prove that weak quasi-o-minimality of a theory with respect to one definable linear order implies weak quasi-o-minimality with respect to any other such order. 
\end{abstract}

\maketitle

\noindent
By a binary reduct of a first-order structure $(M,\ldots)$ we  mean a structure $(M,B_i)_{i\in I}$ in which  each $B_i$ is  a unary or binary definable\footnote{Throughout, definable means $0$-definable.} set. The full binary reduct is the one in which all unary and binary definable sets are named. In this paper, we are interested in those binary reducts of $\aleph_0$-saturated linearly ordered structures $ \mathcal M = (M, <, ...) $ that are simple in the sense that the family $(B_i)_{i\in I}$ consists of relatively simple geometric objects, and that (the complete theory of) the reduct eliminates quantifiers. For each such reduct, it is  interesting to determine conditions on $\mathcal M$ (or $Th(\mathcal M)$) which would guarantee that the complete binary structure of $\mathcal M$ is already definable in the reduct, i.e. whether the  reduct is interdefinable with the full binary reduct of $\mathcal M$. 
The initial motivation for our work lies in a result of Pierre Simon from \cite{Simon}.  There, he investigated the reduct 
 \begin{center}
$(M,<,C_i,R_j)_{i\in I,j\in J}$, in  which $C_i$'s name all colors (unary definable sets)\\ and $R_j$'s name all definable monotone relations (see Definition \ref{Definition monotone relation}) from $\mathcal M$, 
\end{center}
call it the cml-reduct of $\mathcal M$, and proved   elimination of quantifiers. In particular, if $\mathcal N$ is a  colored order (linear order with unary predicates), then the cml-reduct provides a relatively simple  description of all sets definable in $\mathcal N$. 
A closer description of definable sets in a colored order is offered in \cite{IMT}. There,  we introduced the ccel-reduct of $\mathcal M$:
 \begin{center}$(M,<,C_i,E_j)_{i\in I,j\in J}$, in  which $C_i$'s name all colors and 
  $E_j$'s name all 
    definable convex equivalence  relations (classes are convex),
  \end{center}
and proved that it eliminates quantifiers after naming all relations $x\leqslant S^n_{j}(y)$ and $x<S^n_{j}(y)$ ($n\in\mathbb Z$, $j\in J$), where $S^n_{j}(y)$ denotes the $n$-th successor $E_j$-class of $y$ (if such a class exists).

For a family $\mathcal F$  of definable linear orders on  $M$ with $<\in \mathcal F$,  the $\mathcal F$-monotone reduct (or simply $\mathcal F$-reduct) of $\mathcal M$ is obtained by naming all unary definable sets  and all definable   monotone relations from $(M,<)$ to $(M,\lhd)$   for some $\lhd\in\mathcal F$ (see Definition \ref{dfn monotone relation}).   
 In Proposition \ref{prop general simon}  we  provide a sufficient condition on $\mathcal F$ for the $\mathcal F$-reduct to eliminate quatifiers. In particular, each of the families  $\mathcal F=\{<\}$ and $\mathcal F=\{<,>\}$ satisfies that condition, as well as a certain family $\mathcal W_T$ (where $T=Th(\mathcal M)$) which consists of all orders $<_{\vec E}$ defined by recursively inverting the order $<$ on equivalence classes of a decreasing sequence of definable $<$-convex equivalence relations $\vec E$ (see Definition \ref{Definition <E}). 
 We will focus on $\mathcal W_T$-reducts,   or weakly monotone reducts, and introduce the notion of weak monotonicity for theories: $T$ is weakly monotone with respect to a definable linear order $<$ if for all finite $A\subseteq \Mon$ the $\mathcal W_{T_A}$-reduct  coincides (interdefinably) with the full binary reduct of $(\Mon,<,a,\ldots)_{a\in A}$; $T$ is weakly monotone if it is so with respect to some definable linear order. Surprisingly, it turns out that the class of weakly monotone theories is already known by another name. 

\begin{TheoremI}\label{theorem 1}
Let $T$ be a complete first-order theory with infinite models.
\begin{enumerate}[(i)]
\item $T$ is weakly quasi-o-minimal\footnote{See Definition \ref{dfn_WQOM}.} if and only if $T$ is weakly monotone.
\item Weak monotonicity (weak quasi-o-minimality) of $T$ does not depend on the choice of a definable linear order: if $T$ is weakly monotone with respect to one definable linear order, then it is so with respect to any other.  
\end{enumerate}
\end{TheoremI}

It is well known that the weak o-minimality of a theory depends on the choice of a definable linear order (see Example \ref{Example wom and orders}). By Theorem \ref {theorem 1}(ii) we know that this is not the case with weak quasi-o-minimality. Hence, the class of weakly quasi-o-minimal theories behaves better than the class of weakly o-minimal ones, viewed from at least one aspect. The main imperfection of that class lies in the cumbersome name; this is now fixed by Theorem \ref {theorem 1}(i).

\begin{TheoremI}\label{theorem 2}
Let $T$ be a weakly monotone theory  with a definable linear order $<$ and $\Mon$ its monster model. 
\begin{enumerate}[(i)]
 \item  Every definable subset of $\Mon^2$ is defined by a Boolean combination of unary formulae and formulae defining   weakly monotone relations.
 \item The theory of the full binary reduct of $\Mon$ eliminates quantifiers. 
 \item If $\prec$ is a definable linear order on $\Mon$, then there are a definable partition $\Mon= D_1\cup\ldots\cup D_n$   and orders  $<_{\vec E_1},\ldots,<_{\vec E_n}\in \mathcal W_T$ such that
 $\prec$ agrees with $<_{\vec E_i}$ on $D_i$ for all $i=1,\ldots,n$.
\item (Weak monotonicity theorem)
For every definable set $D\subseteq \Mon$ and definable function\footnote{See Remark \ref{Remark Dedekind completion}.} $f:D\to\overline{\Mon}$ 
there exist  a definable partition $D=D_1\cup\dots\cup D_n$ and orders $<_{\vec E_1}, \dots,<_{\vec E_n}\in\mathcal W_T$ such that each restriction $f\upharpoonright D_i$, viewed as a function from $(D_i,<_{\vec E_i})$ into $(\overline\Mon,<)$, is increasing for all $i=1,\dots,n$.
\end{enumerate}
  \end{TheoremI} 
All parts  of Theorem 2 are new even in the weakly o-minimal case. In particular, the weak monotonicity theorem can be viewed as a generalization of the local monotonicity theorem proved by Macpherson, Marker, and Steinhorn in \cite{Mac}. 
Let us also note that the simplicity of the binary structure of o-minimal theories was noticed by Mekler, Rubin, and Steinhorn in \cite{MRS}.  

\smallskip 
The paper is organized as follows.  The first section contains preliminaries and the second some basic facts on weakly quasi-o-minimal theories. In Section 3, we introduce terminology concerning monotone relations between linear orders and prove some simple facts about them. 
We start Section 4 by introducing $\mathcal F$-reducts  and isolate a condition on $\mathcal F$ implying the elimination of quantifiers in the reduct; we show that the families  $\{<\}$ and $\{<,>\}$ satisfy the condition.  Then we introduce the family $\mathcal W_T$, and a related, more technical family $\mathcal S_T$. We prove that the $\mathcal S_T$-reduct eliminates quantifiers, while only in the final part of the paper we show that the $\mathcal W_T$-reduct and the $\mathcal S_T$-reduct always coincide.  In Section 5,  we prove  the most technically demanding fact, Proposition \ref{Prop_new order on p},  in which we show that any ``new'' order on the locus of a complete type of a weakly quasi-o-minimal theory is of the form $<_{\vec E}$ for some decreasing sequence of definable, convex equivalence relations $\vec E$; as a corollary we will obtain a close description of definable total preorders in weakly 
quasi-o-minimal theories.  We will then be able to routinely complete proofs of Theorems 1 and 2 in Sections 6 and 7. 
The last section contains questions and concluding remarks.

\section*{Acknowledgment}
The present paper is a considerably modified version of the preprint named Monotone theories \cite{MoT}. It was written according to numerous comments and
suggestions from the Referee, to whom we are greatly grateful as her/his help significantly improved the quality of the paper.

\section{Preliminaries}\label{S1}

Throughout the paper we use standard model-theoretical terminology and notation.  By $L$ we denote a first-order language, by $T$ a complete $L$-theory with infinite models, and by $M$ an arbitrary model of $T$.  Unless otherwise stated, by $\Mon$ we denote a monster model of $T$. Elements of models are denoted by $a,b,\ldots$, tuples of elements by $\bar a,\bar b,\ldots$, and subsets of models (for $\Mon$, small subsets, i.e.\ subsets of cardinality strictly less than $|\Mon|$) are denoted by $A,B,\ldots$. By a formula, we will always mean an $L$-formula, unless the presence of parameters is emphasized. By an $L_A$-formula, we mean a formula with parameters from $A$. 
For a formula $\phi(\bar x)$, possibly with parameters from $A\subseteq M$, by $\phi(M)$ we denote its solution-set in $M^{|\bar x|}$. For a subset $D\subseteq M^n$ we use the following notation:

\begin{itemize}
    \item[--] $D$ is {\it $A$-definable} if $D=\phi(M)$ for some $L_A$-formula $\phi(\bar x)$ in $n$ free variables;
    \item[--] $D$ is {\it definable} if it is $\emptyset$-definable; 
    \item[--] $D$ is {\it type-definable over $A$} if it is   an intersection of $A$-definable sets;
    \item[--] If $D\subseteq S$ and $S$ is type-definable over $A$, then $D$ is {\it relatively $A$-definable in $S$} if  $D=D'\cap S$ for some $A$-definable set $D'$.
\end{itemize}

\subsection{Linear orders}
Let $(X,<)$ be a linear order. For a subset $D\subseteq X$ we use the following terminology: 
\begin{itemize}
 \item[--] $D$ is {\em convex} if $a,b\in D$ and $a<c<b$ imply $c\in D$; 
 \item[--] $D$ is an {\em initial part} if it is a downward-closed  set, i.e.\ $a\in D$ and $b<a$ imply $b\in D$. {\em Final parts} are defined dually; 
 \item[--] A subset $C\subseteq D$ is a {\it convex component} (or just a {\em component}) of $D$ if $C$ is a maximal subset of $D$ among those which are convex in $X$;
 \item[--] If $Y\subseteq X$, we will say that $D$ is {\em (relatively) convex on $Y$}, if $D\cap Y$ is convex in $(Y,<)$. The notions  $D$ is {\it (relatively) initial on $Y$} and  {\em $C\subseteq D$ is a relative convex component of $D$ in $Y$} are defined similarly. 
\end{itemize} 
For every subset $D\subseteq X$ the set of all convex components of $D$ forms a partition of $D$, so the meaning of ``$D$ has finitely many components" is clear. We will say that {\it $D$ has finitely many components on $Y\subseteq X$}  if $D\cap Y$ has finitely many components in $(Y,<)$. 
The property ``having finitely many components (on $Y$)" is clearly closed under Boolean combinations.
 
An equivalence relation $E$ on a linear order $(X,<)$ is {\em convex} if all $E$-classes are convex; if $E$ is a convex equivalence relation on $X$, then the quotient set $X/E$ is naturally linearly ordered.

All the previous definitions should be read as {\em with respect to $<$}. Although   we will often change the order that we refer to, if the meaning of $<$ is clear from the context   we will omit stressing it.

\begin{Definition} Suppose that $T$ is a complete theory with a definable linear order $<$, and let $\phi(x;\bar y)$  be a formula with the first variable separated. Fix $M\models T$. 
\begin{enumerate}[(a)] 
\item $\phi(x;\bar y)$  is {\em $<$-convex} \ if $\phi(x,\bar a)$ defines a convex subset  of $M$ for all $\bar a\in M^{|\bar y|}$;

\item $\phi(x;\bar y)$  is {\em $<$-initial} \ if $\phi(x,\bar a)$ defines an initial part  of $M$ for all $\bar a\in M^{|\bar y|}$.
\end{enumerate}
Clearly, the notions of $<$-convex and $<$-initial formulae don't depend on the choice of $M\models T$.
\end{Definition}
 When the meaning of the order $<$ is clear from the context we will usually omit mentioning it  and simply say that a formula is convex (initial).   

\begin{Remark}\label{Remark Dedekind completion}
Suppose that $<$ is a definable linear order on $\Mon$ and let  $\overline{\Mon}$ denote the corresponding completion. The concept of a sort in $\overline\Mon$ is standard, as   is the concept of a definable function from $\Mon^n$ to (a sort in) $\overline\Mon$, see subsection 1.2. in \cite{Mac}; these are certain $\Mon^{eq}$-objects, although $\overline\Mon$ itself is not a member of $\Mon^{eq}$. Each sort in $\overline\Mon$ is naturally ordered by (an extension of) $<$, as is $\overline{\Mon}$.
In this paper, we work in a single-sorted structure  $\Mon$ and use $<$-initial formulae. An alternative to that is the use of definable functions $f:\Mon^n\to\overline\Mon$: 
 any initial formula is equivalent to   $(\theta(\bar y)\land x\leqslant f(\bar y))\vee   (\lnot\theta(\bar y)\land x< f(\bar y))$ for some formula $\theta(\bar y)$ and some definable function  $f:\Mon^{|\bar y|}\to \overline\Mon$. We will use this notation only in the formulation of the weak monotonicity theorem.  
\end{Remark}

\begin{Remark}\label{remark choosing convex and initial formulae}
Let  $D_{\bar a}\subseteq Y_{\bar a}$ be $\bar a$-definable subsets of a model $M$ containing $\bar a$.

(i) If $D_{\bar a}$ is relatively convex in $Y_{\bar a}$, then  there is a convex formula $\phi(x;\bar y)$  such that  $\phi(x,\bar a)$ relatively defines $D_{\bar a}$ within $Y_{\bar a}$; for $\phi(x;\bar y)$ we may take the formula defining the convex hull (in $M$) of $D_{\bar y}\cap Y_{\bar y}$. A similar observation holds if $D_{\bar a}$ is an initial part of $Y_{\bar a}$.

(ii) If $D_{\bar a}$ has finitely many components on $Y_{\bar a}$, then each of the components is $\bar a$-definable. The leftmost one, call it $C$, is defined by: 
$$x\in D_{\bar a}\land \forall z\in D_{\bar a} (z\leqslant x\Rightarrow\forall u(z\leqslant u\leqslant x\Rightarrow u\in D_{\bar a})),$$
the next one by the same formula with $D_{\bar a}$ replaced by $D_{\bar a}\smallsetminus C$, and so on. 
\end{Remark} 

\begin{Remark}\label{remark convex fla is difference of initial}
Every $<$-convex formula $\psi(x;\bar y)$ is equivalent to $\psi_1(x,\bar y)\land\lnot\psi_2(x,\bar y)$, where $\psi_1(x;\bar y)$ and $\psi_2(x;\bar y)$ are $<$-initial formulae given by:
\begin{center}
$\psi_1(x;\bar y):= \exists x'\,(\psi(x',\bar y)\land x\leq x')\ \ \ \mbox{ and }\ \ \ \psi_2(x;\bar y):=\forall x'\,(\psi(x',\bar y)\Rightarrow x<x').$ 
\end{center}
\end{Remark}

\begin{Lemma}\label{Lema definition of fincmop sets} Suppose that $T$ is a complete theory with a definable linear order $<$ and $\Mon\supseteq A$  is its $(\aleph_0+|A|^+)$-saturated model. Let $\pi(x)$ be a partial type over $A$. 
\begin{enumerate}[(i)]
\item Suppose that $D\subseteq \Mon$  is $A$-definable. If $D$ has finitely many   convex components on $\pi(\Mon)$, then each of them is relatively defined by an $A$-instance of some convex formula. In particular, $D\cap \pi(\Mon)$ is relatively defined by a Boolean combination of $A$-instances of initial formulae.
\item Suppose that $E$ is a  relatively $A$-definable convex equivalence relation on $\pi(\Mon)$. Then there exists an $A$-definable convex equivalence relation $E_0$ which agrees with $E$ on $\pi(\Mon)\times\pi(\Mon)$.
Moreover, if $E_1(x,y)$ is an $A$-definable convex equivalence relation on $\Mon$ such that $E$ is finer than $E_1$ on $\pi(\Mon)$, then $E_0$ can be chosen finer than $E_1$.

\item If $\prec$ is a relatively $A$-definable linear order on $\pi(\Mon)$, then there is an $A$-definable linear order $\lhd$ which agrees with $\prec$ on $\pi(\Mon)$.
\end{enumerate}
\end{Lemma}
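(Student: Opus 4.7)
All three parts follow a common scheme: by compactness, one shrinks the type-definable set $\pi(\Mon)$ to a single $A$-definable approximation $\pi_0(x)\in\pi$ on which the relatively definable object already becomes genuinely $L$-definable, and then transfers back using Remarks~\ref{remark choosing convex and initial formulae} and \ref{remark convex fla is difference of initial}.

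For (i), write $D=\psi(\Mon,\bar a)$. The existence of $k+1$ convex components of $D$ on $\pi(\Mon)$ would be witnessed by an alternating chain $c_1<s_1<\cdots<s_k<c_{k+1}$ with $\psi(c_i,\bar a)\wedge\neg\psi(s_i,\bar a)$; its nonexistence on $\pi(\Mon)$ is an inconsistency of an existential partial type with $\pi$. Compactness therefore gives $\pi_0(x)\in\pi$ on which no such chain exists, so $D$ has at most $k$ convex components on $\pi_0(\Mon)$. Distinct $C_i$'s cannot share a $\pi_0$-component (else, by convexity inside $\pi_0(\Mon)$, they would merge into a single $\pi$-component), so in fact there are exactly $k$ components $C_0^{(1)},\ldots,C_0^{(k)}$ on $\pi_0(\Mon)$ with $C_0^{(i)}\cap\pi(\Mon)=C_i$. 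Each $C_0^{(i)}$ is $A$-definable and relatively convex in the $A$-definable set $\pi_0(\Mon)$, so Remark~\ref{remark choosing convex and initial formulae} delivers a convex $L$-formula $\phi_i$ whose $A$-instance defines $C_0^{(i)}$ inside $\pi_0(\Mon)$, and hence defines $C_i$ inside $\pi(\Mon)$. The ``in particular'' clause is immediate from Remark~\ref{remark convex fla is difference of initial}.

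For (ii) and (iii), let $\eta(x,y)$ (resp.\ $\rho(x,y)$) be an $A$-definable formula restricting to $E$ (resp.\ $\prec$) on $\pi(\Mon)$. The finitely many axioms of a convex equivalence (resp.\ a linear order) hold on $\pi(\Mon)$, and the failure of each axiom is an existential condition inconsistent with $\pi$; compactness then yields $\pi_0(x)\in\pi$ on which $\eta$ (resp.\ $\rho$) itself satisfies all the axioms. For (ii), define
\[
E_0(x,y)\;:=\;x=y\;\vee\;\exists u,v\bigl(\pi_0(u)\wedge\pi_0(v)\wedge\eta(u,v)\wedge\min(u,v)\le x,y\le\max(u,v)\bigr);
\]
its classes are the convex hulls in $\Mon$ of the $\eta$-classes on $\pi_0(\Mon)$, padded by singletons at the points in no hull, and pairwise disjointness of these hulls together with convexity of $\eta$ on $\pi_0(\Mon)$ give that $E_0$ is a convex equivalence agreeing with $E$ on $\pi(\Mon)\times\pi(\Mon)$. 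The ``moreover'' clause is handled by running the same construction with $\eta\wedge E_1$ in place of $\eta$, forcing $E_0$-classes inside $E_1$-classes. For (iii), declare $x\lhd y$ iff one of: (a) $\pi_0(x)\wedge\pi_0(y)\wedge\rho(x,y)$, (b) $\pi_0(x)\wedge\neg\pi_0(y)$, (c) $\neg\pi_0(x)\wedge\neg\pi_0(y)\wedge x<y$. A short case analysis verifies $\lhd$ is a linear order on $\Mon$ extending $\prec$ on $\pi(\Mon)$.

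The main delicate point is in (i): extracting from compactness not merely ``at most $k$'' but ``exactly $k$'' convex components on $\pi_0(\Mon)$, and checking that each of them is uniformly $A$-definable so that Remark~\ref{remark choosing convex and initial formulae} applies. Once (i) is in hand, (ii) and (iii) are straightforward compactness-plus-formula constructions.
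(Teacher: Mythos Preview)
Your proof is correct and follows essentially the same route as the paper: compactness to pass to a definable approximation $\pi_0$ (the paper's $\theta$), then explicit formula constructions for $E_0$ and $\lhd$ that match the paper's almost verbatim. The only cosmetic difference is in~(i): the paper counts the convex components of $D$ and $D^c$ together (so they form a partition, making the ``exactly $n$'' count immediate from $\pi(\Mon)\subseteq\theta(\Mon)$), whereas you count only the components of $D$ and argue injectivity of the map $C_i\mapsto C_0^{(i)}$ directly; both work, and your handling of the ``moreover'' in~(ii) via $\eta\wedge E_1$ is equivalent to the paper's adding the refinement condition into the compactness step.
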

\begin{proof} (i) Suppose that $D=\phi(\Mon,\bar a)$ has finitely many  convex components on $\pi(\Mon)$;  then so does the complement $D^c=\lnot\phi(\Mon,\bar a)$. 
 Let   $n$ be the overall number of these components. Then:
$$\bigcup_{i=0}^{n}\pi(x_i)\ \cup\ \{x_i<x_{i+1}\mid i<n\}\ \vdash\ \bigvee_{i<n} (\phi(x_i, \bar a)\Leftrightarrow \phi(x_{i+1},\bar a))\,. $$
By compactness, there is  a finite conjunction $\theta(x)$ of formulae from $\pi(x)$ with:
$$ \bigwedge_{i<n}\theta(x_i)\ \land\ \bigwedge_{i<n}x_i<x_{i+1}  \vdash\ \bigvee_{i<n} (\phi(x_i, \bar a)\Leftrightarrow \phi(x_{i+1},\bar a))\,. $$
This means that the overall number $N$ of  convex components of $D$ and $D^c$ within $\theta(\Mon)$ is at most $n$. On the other hand,  $\pi(\Mon)\subseteq \theta(\Mon)$ implies  $n\leqslant N$. Thus, $n=N$ and for each component $C_i$ within $\theta(\Mon)$,  $\pi(\Mon)\subseteq \theta(\Mon)$ implies that $C_i\cap\pi(\Mon)$ is a component within $\pi(\Mon)$.  $C_i$ is $A$-definable  by Remark \ref{remark choosing convex and initial formulae}(ii) and by part (i) of that remark the defining formula may be chosen to be an $A$-instance of a convex formula; let $C$ be the union of all $C_i$'s that are 
components of $D$ within $\theta(\Mon)$.  Clearly, $C$ is a Boolean combination of $A$-instances of convex (and hence of initial by Remark \ref{remark convex fla is difference of initial}) formulae and $x\in C$ relatively defines $D\cap \pi(\Mon)$.

\smallskip
(ii) Let $\varepsilon(x,y)$ be an $L_A$-formula relatively defining $E$ on $\pi(\Mon)$. By compactness we can find a finite conjunction of formulae from $\pi(x)$, $\theta(x)$, such that $\varepsilon(x,y)$ defines a convex equivalence relation on $\theta(\Mon)$ (finer than $E_1 $ there). Then the relation  $E_0\subseteq\Mon\times \Mon$   defined by the following formula works:
$$(\exists\, u,v\in\theta(\Mon))(u\leq x,y\leq v\wedge\varepsilon(u,v))\ \lor\ x=y.$$
 
\smallskip
(iii) Again by compactness we can find a finite conjunction of formulae from $\pi(x)$, $\theta(x)$, such that $x\prec y$ defines a linear order on $\theta(\Mon)$. Now we define $\lhd$ such that $\theta(\Mon)\lhd\neg\theta(\Mon)$, $\lhd$ agrees with $\prec$ on $\theta(\Mon)$, and with $<$ on $\neg\theta(\Mon)$.
\end{proof}

\subsection{Defining new orders}

Let $(X,<)$ be a linear order  and  $E$ a convex equivalence relation on $X$. Define:
$$x<_Ey\ \mbox{ iff }\ (E(x,y)\mbox{ and }y<x)\mbox{ or }(\neg E(x,y)\mbox{ and }x<y).$$
It is easy to see that $<_E$ is a linear order on $X$; it reverses the order within each $E$-class, but the classes in the factor order remain ordered originally. In particular, $E$ is a $<_E$-convex equivalence relation and the factor orders induced by $<$ and $<_E$ on $X/E$ are equal.
We can further iterate the construction.

\begin{Definition}\label{Definition <E} Let $(X,<)$ be a linear order and 
  $\vec E=(E_1,E_2,\ldots,E_n)$ a sequence of equivalence relations on $X$.
  \begin{enumerate}[(a)]
  \item We say that the sequence $\vec E$ is {\it $<$-admissible} if $E_1$ is $<$-convex, $E_2$ is $<_{E_1}$-convex, $E_3$ is $(<_{E_{1}})_{E_2}$-convex, etc. 
  \item If $\vec E$ is an admissible sequence, then  \  $<_{\vec E}:=(\ldots((<_{E_1})_{E_2})_{E_3}\ldots)_{E_{n}}$. 
  \end{enumerate}
\end{Definition}

Usually,   $(X,<)$ and  $\vec E$ will be  $A$-definable, in which case  the resulting order $<_{\vec E}$ will be  $A$-definable, too. 

\begin{Remark}\label{remark decreasing is admissible}
(i) For any $<$-convex equivalence relation $E$ we have $<_{(E,E)}=<$.

(ii) Note that for any $<$-convex equivalence relation $E'$ which is either coarser or finer than  a $<$-convex equivalence relation $E$ we have that $E'$ is $<_E$-convex; in particular, $(E,E')$ is an admissible sequence. 

(iii) Any decreasing sequence $\vec E=(E_1,\ldots,E_n)$ of $<$-convex equivalence relations ($E_{i+1}$ refines $E_i$ for all relevant $i$) is admissible: by induction one proves that $E_{m}$ is $<_{E_1,\ldots,E_k}$-convex for all $k<m\leqslant n$, then part (ii) applies and one can continue the induction.  
\end{Remark}

\begin{Lemma}\label{remark admissible reversed order}
Let $\vec E=(E_1,\ldots,E_{n})$ be a $<$-admissible sequence on $(X,<)$.
\begin{enumerate}[(i)]
    \item Sequences $\vec E_1:=(X\times X,E_1,\ldots,E_{n})$ and $\vec E_2:=(E_1,\ldots,E_{n}, X\times X)$ are also $<$-admissible and $<_{\vec E_1}=<_{\vec{E_2}}$ is just the reversed order of  $<_{\vec E}$.
    \item The sequence $\vec E^*=(E_n,\ldots,E_1)$,  is $<_{\vec E}$-admissible and  $(<_{\vec E})_{\vec E^*}=<$\,.
\end{enumerate}
\end{Lemma}
\begin{proof}
(i) is easy. To prove (ii) notice that  $E_n$ is $<_{(E_1,\ldots, E_{n-1})}$-convex  since $\vec E$ is $<$-admissible. Hence $(E_n,E_n)$ is $<_{(E_1,\ldots, E_{n-1})}$-admissible, so  \begin{center}$(<_{\vec E})_{E_n}=(<_{(E_1,\ldots, E_{n-1})})_{(E_n,E_n)}=<_{(E_1,\ldots, E_{n-1})}$.\end{center} We conclude by induction.
\end{proof}

\begin{Lemma}\label{remark convex set and new orders} 
Let $\vec E=(E_1,\ldots,E_n)$ be a $<$-admissible sequence on $(X,<)$ and let $D\subseteq X$. Then $D$ has finitely many $<$-convex components if and only if it has finitely many $<_{\vec E}$-convex components. 
\end{Lemma}
\begin{proof}
First we show that any $<$-convex set $D\subseteq X$ has finitely many $<_{\vec E}$-convex components. 
If $n=1$, then $D$ intersects properly at most two $E_1$-classes (the endpoints of $D/E_1$), hence $D$ has at most three $<_{E_1}$-convex components. Each of these components has at most three $(<_{E_1})_{E_2}$-components, and so on; $D$ has at most $3^n$ $<_{\vec E}$-components. It follows that a subset with $m$ $<$-convex components has at most $m\cdot 3^n$\, $<_{\vec E}$-convex components. 

Assume that $D'\subseteq X$ has $m$\, $<_{\vec E}$-convex components and let $\vec E^*=(E_n,\ldots,E_1)$. Then  $(<_{\vec E})_{\vec E^*}=<$, so by the above (applied to $D'$, $\vec E^*$, and $<_{\vec E}$)
$D'$ has at most $m\cdot 3^n$\, $<$-convex components. \end{proof}

\section{Basic facts on weakly quasi-o-minimal theories}\label{section_basic_wqom}
Weakly quasi-o-minimal theories were introduced by Kuda\u\i bergenov in \cite{Kud} as a generalization of both weakly o-minimal and quasi-o-minimal theories; the latter were introduced by Belagradek, Peterzil and Wagner  in \cite{BPW}.
Originally, Kuda\u\i ber\-genov's definition   assumes that a linear order is a part of the language. Here we  require only that $T$ has a definable linear order, as it will turn out that weak quasi-o-minimality does not depend on the choice of the order.

\begin{Definition}\label{dfn_WQOM} 
\begin{enumerate}[(a)]
    \item A theory $T$ is {\em weakly quasi-o-minimal with respect to a definable  linear order $<$} if for some $\aleph_0$-saturated model $\Mon\models T$ every parametrically definable subset of $\Mon$ is a finite Boolean combination of $<$-convex sets and (unary $L$-)definable sets; 
    \item A theory $T$ is {\em weakly quasi-o-minimal} if it is weakly quasi-o-minimal with respect to some definable linear order $<$. 
\end{enumerate}
\end{Definition}

Note that in the first part of the definition we ask that the desired condition holds in some $\aleph_0$-saturated model. However, as we shall see in Proposition \ref{prop_WQOM_characterization}(ii), if the desired condition holds in some $\aleph_0$-saturated model, it holds in every model. Also, note that in the definition we did not require the convex sets to be definable.

\begin{Remark}\label{rmk naming parameters and qwom}
Clearly,  weak  quasi-o-minimality of the theory is preserved under naming parameters. However, it is not preserved in reducts: consider the theory $T$ of the structure $(\mathbb Q,<,E)$, where $E$ is an equivalence relation with two topologically dense classes. Note that none of the classes is a Boolean combination of definable and convex sets. Hence $T$ is not weakly quasi-o-minimal, but naming a single parameter makes it such. 
\end{Remark}

\begin{Proposition}\label{prop_WQOM_characterization} 
Let $T$ be a complete theory with a definable linear order $<$.  
\begin{enumerate}[(i)]
\item Let $\Mon\models T$ be $\aleph_0$-saturated. The following conditions are  equivalent:
\begin{enumerate}[(1)]
\item $\Mon$ witnesses that $T$ is weakly quasi-o-minimal with respect to $<$.

\item For all $p\in S_1(T)$ every $\Mon$-definable set $D\subseteq\Mon$ has finitely many $<$-convex components on $p(\Mon)$.

\item Every unary $L_{\bar a}$-formula is a Boolean combination of unary $L$-formulae and $\bar a$-instances of $<$-convex $L$-formulae (for all $\bar a\subseteq\Mon$). 
\end{enumerate}
\item If $T$ is weakly quasi-o-minimal with respect to $<$, then every parametrically definable subset of every model is a finite Boolean combination of $<$-convex sets and definable sets. In particular, every $\aleph_0$-saturated model witnesses that $T$ is weakly quasi-o-minimal with respect to $<$.
\end{enumerate}
\end{Proposition}
\begin{proof} 
(i) (1)$\implies$(2): Assume that $\Mon$ witnesses that $T$ is weakly quasi-o-minimal with respect to $<$ and that $D\subseteq\Mon$ is $A$-definable.   Since both the family of unary definable sets and the family of finite unions of convex sets are closed under Boolean combinations, we may write $D$ as $\bigcup_{i\leqslant n}U_i\cap C_i$, where  $U_i$'s form a definable partition of $\Mon$   and each $C_i$ is a finite union of convex sets. If $p\in S_1(T)$, then $p(\Mon)\subseteq U_i$ holds for some $i\leqslant n$, whence $p(\Mon)\cap D=p(\Mon)\cap C_i$; since $C_i$  has finitely many convex components on $p(\Mon)$, so does $D$. 

\smallskip
(2)$\implies$(3): Suppose that condition (2) holds. We will prove that a formula  $\phi(x,\bar a)$  is equivalent to a Boolean combination of unary $L$-formulae and ${\bar a}$-instances of convex $L$-formulae. Fix $p\in S_1(T)$.  Condition  (2) implies that the set 
$D=\phi(\Mon,\bar a)$ has finitely many   convex components within  $p(\Mon)$, so by Lemma \ref{Lema definition of fincmop sets}(i) there is  a Boolean combination of convex $L$-formulae $\phi_p(x;\bar y)$ with $\phi_p(x,\bar a)$ relatively defining $D\cap p(\Mon)$: \ 
 $p(x) \vdash \phi_p(x,\bar a)\Leftrightarrow \phi(x,\bar a)$. \
By compactness, there is a formula $\theta_p(x)\in p(x)$ such that:
 $\models \theta_p(x) \Rightarrow (\phi_p(x,\bar a)\Leftrightarrow \phi(x,\bar a))$. \  In other words, the set $D_{p}=D\cap \theta_p(\Mon)$ is definable by a Boolean combination of 
 unary $L$-formulae and ${\bar a}$-instances of convex $L$-formulae. 
Since $\{[\theta_p]\mid p\in S_1(T)\}$ covers $S_1(T)$,  it has a  finite subcover $\{[\theta_{p_i}]\mid i\leqslant k\}$. Then $D=\bigcup_{i\leqslant k} D_{p_i}$ is definable by a  Boolean combination of  
 unary $L$-formulae  and ${\bar a}$-instances of convex $L$-formulae. This completes the proof of $(2)\implies(3)$. Since  $(3)\implies(1)$ trivially holds the proof of (i) is complete.
 
\smallskip
(ii) Let $\Mon$ be an $\aleph_0$-saturated model witnessing that $T$ is weakly quasi-o-minimal with respect to $<$, let $M\models T$, and let $D\subseteq M$ be a subset defined by $\phi(x,\bar m)$, $\bar m\subseteq M$. Take a realization $\bar a$ of $\tp(\bar m)$ in $\Mon$. By (3) of (i), $\phi(x,\bar a)$ is equivalent to a Boolean combination of $L$-formulae and $\bar a$-instances of $<$-convex $L$-formulae; denote this Boolean combination by $B(x,\bar a)$. Thus $\forall x(\phi(x,\bar y)\Leftrightarrow B(x,\bar y))\in \tp(\bar a)=\tp(\bar m)$, so $\phi(x,\bar m)$ is equivalent to $B(x,\bar m)$ in $M$. Since each formula participating in $B(x,\bar y)$ is either an unary $L$-formula or a $<$-convex $L$-formula, we conclude that $D$ is a Boolean combination of definable and $<$-convex sets.
\end{proof}

For the rest of the section we work in a monster model $\Mon$ of $T$.

\begin{Corollary}\label{cor wqom fla is ui expressible} Suppose that $T$ is weakly quasi-o-minimal with respect to $<$. Then every binary formula   $\phi(x,y)$ is equivalent to a Boolean combination of unary and binary $<$-initial formulae of the form $\psi(x;y)$.
\end{Corollary} 
\begin{proof}Fix $\phi(x,y)$, $p\in S_1(T)$, and  $b\models p$. By Proposition \ref{prop_WQOM_characterization}(i)(3), the formula $\phi(x,b)$ is equivalent to a Boolean combination of unary formulae and $\{b\}$-instances of convex formulae. By Remark \ref{remark convex fla is difference of initial} each convex formula may be replaced by a Boolean combination of initial formulae; denote the obtained
formula by $\theta_p(x,b)$. Clearly, $p(y)\forces \phi(x,y)\Leftrightarrow\theta_p(x,y)$. By compactness there is $\sigma_p(y)\in p(y)$ such that $\sigma_p(y)\forces \phi(x,y)\Leftrightarrow \theta_p(x,y)$. The set $\{[\sigma_p]\mid p\in S_1(T)\}$ covers $S_1(T)$ so by compactness we can extract a finite subcover $\{[\sigma_{p_i}]\mid 1\leq i\leq n\}$. Now $\phi(x,y)$ is clearly equivalent to the disjunction $\bigvee_{i=1}^n(\sigma_{p_i}(y)\land\theta_{p_i}(x,y))$,  which is a desired Boolean combination.
\end{proof}

\begin{Lemma}\label{Plem_change_of_order_by_equivalence} If $T$ is weakly quasi-o-minimal with respect to $<$ and $\vec E$ is an admissible sequence of  definable  equivalence relations, then $T$ is weakly quasi-o-minimal with respect to $<_{\vec E}$\,, too.
\end{Lemma}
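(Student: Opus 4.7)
The plan is to use the characterization in Proposition \ref{prop_WQOM_characterization}, specifically (1)$\Leftrightarrow$(2). By that characterization, it suffices to prove: for every $p\in S_1(T)$ and every $\Mon$-definable $D\subseteq \Mon$, the set $D$ has finitely many $<_{\vec E}$-convex components on $p(\Mon)$. Since $T$ is weakly quasi-o-minimal with respect to $<$, we already know that any such $D$ has finitely many relatively $<$-convex components on $p(\Mon)$. So the heart of the matter is to show that each relatively $<$-convex subset of $(p(\Mon),<)$ is a finite union of relatively $<_{\vec E}$-convex subsets of $(p(\Mon),<_{\vec E})$.

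To do this, first I would observe that everything behaves nicely under restriction to $p(\Mon)$. Each $E_i$ is (by Remarks \ref{remark decreasing is admissible} and the setup of an admissible sequence) convex with respect to an appropriate order on $\Mon$, so its trace $E_i\!\restriction\!p(\Mon)$ is relatively convex with respect to the restriction of that order. Consequently, the restricted sequence $\vec E\!\restriction\!p(\Mon)$ is admissible with respect to $<\!\restriction\!p(\Mon)$, and a straightforward unwinding of the defining clause ``$x<_E y$ iff ($E(x,y)$ and $y<x$) or ($\neg E(x,y)$ and $x<y$)'' shows that the restriction of $<_{\vec E}$ to $p(\Mon)$ coincides with the order obtained by applying the $\vec E\!\restriction\!p(\Mon)$-construction to $<\!\restriction\!p(\Mon)$.

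Once this identification is in place, the desired decomposition is exactly the content of Remark \ref{remark convex set and new orders}, whose proof is purely combinatorial (it uses only the linear order and its convex equivalence relations, not any ambient definability): a $<$-convex set breaks into at most three $<_E$-convex pieces (the possibly-reversed first and last partially-hit classes, and the middle block of fully-contained classes), and induction on $|\vec E|$ yields a bound of roughly $3^{|\vec E|}$ pieces in general. Applying this combinatorial fact inside the linear order $(p(\Mon),<\!\restriction\!p(\Mon))$ with the admissible sequence $\vec E\!\restriction\!p(\Mon)$ gives that each relatively $<$-convex component of $D$ on $p(\Mon)$ splits into finitely many relatively $<_{\vec E}$-convex pieces. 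Summing over the finitely many $<$-convex components completes the verification of Proposition \ref{prop_WQOM_characterization}(2) for $<_{\vec E}$, and hence the lemma.

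The main obstacle, if any, is the bookkeeping at the restriction step: making sure that ``relatively $<$-convex'' and ``relatively $<_{\vec E}$-convex'' on $p(\Mon)$ are computed with respect to mutually compatible orders, so that the combinatorial fact of Remark \ref{remark convex set and new orders} can legitimately be applied internally to $(p(\Mon),<\!\restriction\!p(\Mon))$. No additional ingredient beyond the earlier results is needed.
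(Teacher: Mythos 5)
Your proposal is correct and follows essentially the same route as the paper: verify condition (2) of Proposition \ref{prop_WQOM_characterization} for $<_{\vec E}$ by splitting each relatively $<$-convex component on $p(\Mon)$ into finitely many relatively $<_{\vec E}$-convex pieces via Remark \ref{remark convex set and new orders}. Your explicit check that the restriction of $\vec E$ and $<_{\vec E}$ to $p(\Mon)$ behaves compatibly (the paper handles the sequence by reducing to a single $E$ and inducting, leaving this restriction step implicit) is a careful but inessential refinement of the same argument.
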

\begin{proof} We will prove that $T$ is weakly quasi-o-minimal with respect to $<_E$  for any definable $<$-convex equivalence relation $E$; the general case follows by induction. Let $D\subseteq \Mon$ be $\Mon$-definable and let $p\in S_1(T)$. By Proposition \ref{prop_WQOM_characterization}(i)(2) it suffices to show that $D$ has finitely many $<_E$-convex components on $p(\Mon)$, knowing that it has finitely many $<$-convex components there; that follows by Lemma \ref{remark convex set and new orders}.
\end{proof}

\begin{Lemma}\label{Lema_equivalences in qwom}
If $T$ is weakly quasi-o-minimal with respect to $<$, then any definable equivalence relation, when restricted to the locus of $p\in S_1(T)$, is  convex.
\end{Lemma}
\begin{proof}
  Assume that for some $p\in S_1(T)$ some $E$-class is not $<$-convex on $p(\Mon)$. We can find $a_0,b_0,a_1\models p$ such that $a_0<b_0<a_1$, $E(a_0,a_1)$ and $\neg E(a_0,b_0)$. Consider $f\in Aut(\Mon)$ with $f(a_0)=a_1$, and define $a_{n+1}=f(a_n)$ and $b_{n+1}=f(b_n)$ for $n\geq 0$. By induction we clearly have $a_n<b_n<a_{n+1}$, $E(a_0,a_n)$, and $\neg E(a_0,b_n)$. This means that the class $E(a_0,\Mon)$ has infinitely many convex components on $p(\Mon)$, which is contrary to Proposition \ref{prop_WQOM_characterization}(i)(2).
\end{proof}

\begin{Example}\label{Example wom and orders}
Consider the structure $\mathcal M=(\mathbb R,<,\mathbb Q)$ and define $\lhd$ by: 
$$\lhd:= (<\cap\ (\mathbb I\times \mathbb I\cup \mathbb Q\times \mathbb Q))\cup\mathbb (\mathbb I\times \mathbb Q),$$
where $\mathbb I:=\mathbb R\smallsetminus\mathbb Q$ is the set of irrational numbers. The relation $\lhd$ is a (definable) linear order on $\mathbb R$, $\lhd$ agrees with $<$ on both $\mathbb I$ and $\mathbb Q$,  and   $\mathbb I\lhd \mathbb Q$. By standard arguments $Th(\mathcal M)$ eliminates quantifiers and $\mathcal M$ is $\aleph_0$-saturated, so one can easily see that $\mathcal M$ (and $Th(\mathcal M)$) is weakly o-minimal with respect to $\lhd$, but it is not so with respect to $<$. 
\end{Example}

\section{Monotone relations}

In this section we work with arbitrary linear orders.

\begin{Definition}\label{Definition monotone relation}
A binary relation $R\subseteq A\times B$ between linear orders  $(A,<_A)$ and $(B,<_B)$ is {\it monotone-increasing} (or simply {\it monotone}) if:
\begin{center}
$a\leqslant_A a'$, \ $a'\,R\,b'$ \ and \ $b'\leqslant_B b$ \ \ imply \ \ $a\,R\,b$. 
\end{center}
\end{Definition}
Binary monotone relations were introduced by Simon in \cite{Simon} and used in our recent paper \cite{MT2}.
Basic examples of monotone relations $R\subseteq A\times B$ are those defined by $f(x)\leqslant_B y$
 or  $f(x)<_B y$ for some increasing function $f:A\to B$. More generally, if $(C,<_C)$ is a linear order, $g:A\to C$ and $h:B\to C$ increasing functions, then $g(x)<_C h(y)$ and $g(x)\leqslant_C h(y)$ also define monotone relations.  

\smallskip
For a binary relation $R\subseteq A\times B$, $a\in A$ and $b\in B$, by $R(a,B)$  we will denote the fiber $\{y\in B\mid a\,R\,y\}$; similarly for $R(A,b)$. The following fact is easily verified and will be used without mention.

\begin{Fact}\label{Fact_monotonicity}
Let 
$(A,<_A)$ and $(B,<_B)$ be linear orders and $R\subseteq A\times B$. Then $R$   is a monotone relation if and only if  any of the following, mutually  equivalent conditions holds:
\begin{enumerate}[(1)]
\item  $(R(A,b)\mid b\in B)$  is a $\subseteq$-increasing sequence of initial parts  of $A$, i.e.\ $R(A,b)$ is an initial part of $(A,<_A)$ for each $b\in B$, and $b_1<_Bb_2$ implies $R(A,b_1)\subseteq R(A,b_2)$;  
\item  $(R(a,B)\mid a\in A)$ is a $\subseteq$-decreasing  sequence  of final parts  of $B$.
\end{enumerate}
\end{Fact}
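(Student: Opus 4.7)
The plan is to prove the equivalence of monotonicity with (1) directly, then obtain the equivalence with (2) by a duality argument: reversing both orders and passing to the converse relation $R^{\mathrm{op}} = \{(b,a)\mid aRb\}$ preserves the monotonicity condition while swapping ``initial part of $A$'' with ``final part of $A$'' and ``$\subseteq$-increasing in $b$'' with ``$\subseteq$-decreasing in $a$''; thus (2) for $R$ is the same statement as (1) for $R^{\mathrm{op}}$ under the reversed orders, and since both conditions are equivalent to monotonicity, they are equivalent to each other.

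For monotonicity $\Rightarrow$ (1), I would fix $b\in B$ and verify that $R(A,b)$ is an initial part of $A$: if $a'\in R(A,b)$ and $a\leq_A a'$, applying the monotonicity condition with $b'=b$ (so $b'\leq_B b$ trivially) gives $aRb$, hence $a\in R(A,b)$. For the $\subseteq$-increasing property, suppose $b\leq_B b'$ and $a\in R(A,b)$; applying monotonicity with the roles $a=a'$ (both equal to the given $a$), $b'$ in the definition set to the given $b$, and $b$ in the definition set to the given $b'$, yields $aRb'$. Conversely, for (1) $\Rightarrow$ monotonicity, suppose $a\leq_A a'$, $a'Rb'$ and $b'\leq_B b$. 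Since $R(A,b')$ is an initial part and contains $a'$, it also contains $a$, so $aRb'$. Since the family is $\subseteq$-increasing and $b'\leq_B b$, we have $R(A,b')\subseteq R(A,b)$, hence $aRb$, as required.

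There is no substantive obstacle in this argument; everything reduces to a careful unwinding of the definition. The only bit of care I expect to need is the bookkeeping of variable names when matching the definition of monotonicity (which uses both primed and unprimed letters in a slightly asymmetric way) against each of the two directions of the argument. Once this is handled, the duality between (1) and (2) is immediate and completes the proof.
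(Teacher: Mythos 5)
Your argument is correct; the paper states this Fact without proof (``easily verified''), and your direct unwinding of the definition for the equivalence with (1), followed by the duality via the converse relation and reversed orders to obtain (2), is exactly the routine verification intended. No gaps: the variable bookkeeping you flag is handled correctly in both directions, and the observation that (2) for $R$ is (1) for $R^{\mathrm{op}}$ with both orders reversed is sound.
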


Since we will consider monotone relations on a fixed domain with various orders, the following definition is appropriate.

\begin{Definition}\label{dfn monotone relation} Let $(X,<)$ and $(X,\prec)$ be linear orders.  A relation $R\subseteq X\times X$ is {\em $(<,\prec)$-monotone} if $R$ is a monotone relation from $(X,<)$ to $(X,\prec)$. 
\end{Definition}

Note that $<$  itself is trivially a $(<,<)$-monotone relation, while $(<,>)$-monotone relations may be called monotone-decreasing.

\begin{Lemma}\label{lemma monotonicity basic} Suppose that $R,R'\subseteq X\times X$ are respectively $(<,\prec)$- and $(<',\prec')$-monotone relations, where $<,\prec,<',\prec'$ are linear orders on $X$. Then:
\begin{enumerate}[(i)]
\item The complement $R^c$ is $(>,\succ)$-monotone and the inverse $R^{-1}$ is a $(\succ,>)$-monotone relation.
$(R^{-1})^c$ is a $(\prec,<)$-monotone relation.

\item By $R(X,x)\subseteq R'(X,y)$ is defined a $(\prec,\prec')$-monotone relation.
\item If $D\subseteq X$, then by $R(X,x)\cap R'(X,y)\subseteq D$ is defined a $(\prec,\succ')$-monotone relation.
\end{enumerate}
\end{Lemma}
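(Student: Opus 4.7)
The proof is mostly a matter of unpacking definitions against Fact~\ref{Fact_monotonicity}. For the complement in part~(i), I would take the contrapositive of $(<,\prec)$-monotonicity of $R$: supposing $a'\leqslant a$, $\neg(a' R b')$, and $b\preccurlyeq b'$, an application of $R$'s monotonicity to the triple $a'\leqslant a$, $aRb$, $b\preccurlyeq b'$ would produce $a' R b'$, contradicting the hypothesis; hence $\neg(a R b)$, which is exactly the $(>,\succ)$-monotonicity clause for $R^c$. For the inverse, I would apply monotonicity of $R$ directly: given $a\succcurlyeq a'$, $a' R^{-1} b'$ (which reads $b' R a'$), and $b'\geqslant b$, the triple $b\leqslant b'$, $b' R a'$, $a'\preccurlyeq a$ yields $b R a$, i.e.\ $a R^{-1} b$, establishing $(\succ,>)$-monotonicity. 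The third claim, $(\prec,<)$-monotonicity of $(R^{-1})^c$, then follows by applying the complement step to the just-established $R^{-1}$.

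For parts~(ii) and~(iii), the lever is Fact~\ref{Fact_monotonicity}(1) applied to both $R$ and $R'$: the family $(R(X,b)\mid b\in X)$ is $\subseteq$-increasing in $b$ with respect to $\prec$, and $(R'(X,b)\mid b\in X)$ is $\subseteq$-increasing with respect to $\prec'$. For (ii), writing $S(x,y)$ for the relation ``$R(X,x)\subseteq R'(X,y)$'', the hypotheses $x\preccurlyeq x'$, $S(x',y')$, $y'\preccurlyeq' y$ yield the chain
\[
R(X,x)\;\subseteq\;R(X,x')\;\subseteq\;R'(X,y')\;\subseteq\;R'(X,y),
\]
so $S(x,y)$ holds, which is $(\prec,\prec')$-monotonicity. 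For (iii), writing $T(x,y)$ for ``$R(X,x)\cap R'(X,y)\subseteq D$'', I would check $(\prec,\succ')$-monotonicity: from $x\preccurlyeq x'$, $T(x',y')$, and $y'\succcurlyeq' y$ (equivalently $y\preccurlyeq' y'$), the two increasing chains in $R$ and $R'$ combine into
\[
R(X,x)\cap R'(X,y)\;\subseteq\;R(X,x')\cap R'(X,y')\;\subseteq\;D,
\]
as needed.

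No genuine obstacle is expected: every clause reduces either to a contrapositive rearrangement of $R$'s monotonicity or to a short inclusion chain built from the ``fibres form a monotone sequence'' characterisation. The only care required is bookkeeping of which order governs which coordinate, which is precisely why the statement explicitly reverses some of the orders in its conclusions.
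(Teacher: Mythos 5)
Your proposal is correct and follows essentially the same route as the paper's proof: part (i) by contrapositive/direct application of the defining implication for monotonicity, and parts (ii) and (iii) by the same inclusion argument that the paper carries out by chasing an element $t$ through the fibres. Your phrasing of (ii) and (iii) via the $\subseteq$-increasing fibre characterisation of Fact~\ref{Fact_monotonicity} is just a repackaging of that element-chase, and your derivation of the $(\prec,<)$-monotonicity of $(R^{-1})^c$ by composing the two previous steps is exactly what the paper leaves implicit.
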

\begin{proof} 
(i) By $(<,\prec)$-monotonicity of $R$, $a\leq a'\,R\,b'\peq b$ implies $a\,R\,b$, so $a'\geq a\,R^c\,b\seq b'$ implies $a'\,R^c\,b'$. Also, $b\seq b'\,R^{-1}\,a'\geq a$ implies $a\leq a'\,R\,b'\peq b$, hence $a\,R\,b$, i.e.\ $b\,R^{-1}\,a$, follows by $(<,\prec)$-monotonicity of $R$. The last part follows from the previous two.

\smallskip
(ii) Assume $a\peq a'$, $R(X,a')\subseteq R'(X,b')$ and $b'\peq ' b$. For $t\in R(X,a)$ we have $t\,R\,a\peq a'$, so $t\,R\,a'$ by $(<,\prec)$-monotonicity of $R$. Since $R(X,a')\subseteq R'(X,b')$ we get $t\,R'\,b'$, which together with $b'\peq' b$ gives $t\,R'\,b$ by $(<',\prec')$-monotonicity of $R'$. Thus $t\in R'(X,b)$ and $R(X,a)\subseteq R'(X,b)$ follows.

\smallskip
(iii) Assume $a\peq a'$, $R(X,a')\cap R'(X,b')\subseteq D$ and $b'\seq' b$. For $t\in R(X,a)\cap R'(X,b)$ we have $t\,R\,a\peq a'$ and $t\,R'\,b\peq' b'$, so $t\,R\,a'$ and $t\,R'\,b'$ follow by appropriate monotonicities. Thus $t\in R(X,a')\cap R'(X,b')$ and hence $t\in D$. Therefore, $R(X,a)\cap R'(X,b)\subseteq D$.
\end{proof}

\section{$\mathcal F$-monotone reducts}

In this section we work with an arbitrary complete theory $T$ with a definable linear order $<$. By $\mathcal F$ we denote a set of definable linear orders on $(\Mon,<,\dots)$ with $<\in \mathcal F$.

\begin{Definition}\begin{enumerate}[(a)]
\item A binary relation on $\Mon$ is {\it $\mathcal F^<$-monotone} if it is $(<,\lhd)$-monotone for some $\lhd\in\mathcal F$.

\item A formula $\phi(x,y)$ is  {\it $\mathcal F^<$-monotone} if it defines an $\mathcal F^<$-monotone relation. 
Let us stress that by saying "$\mathcal F^<$-monotone formula of the form $\phi(x,y)$" we mean that $\phi$ should be understood as a monotone relation from $(\Mon_x,<)$ to $(\Mon_y,\lhd)$ (and not from $(\Mon_y,<)$ to $(\Mon_x,\lhd)$) for some $\lhd\in\mathcal F$. 

\item The {\it $\mathcal F^<$-monotone reduct} of $(\Mon,<,\dots)$, or simply the $\mathcal F^<$-reduct, is the  structure $(\Mon,<, C_i,R_j)_{i\in I,j\in J}$ obtained by naming  all unary definable sets  by  $C_i$'s and all definable $\mathcal F^<$-monotone relations by $R_j$'s.
\end{enumerate}
\end{Definition}
 
\begin{Remark}
(i) Whenever the meaning of the order $<$ is clear from the context, we will omit the superscript and say that a formula is $\mathcal F$-monotone; similarly for $\mathcal F$-monotone relations and $\mathcal F$-reducts. 

(ii) The $\mathcal F$-reduct   depends on the choice of the language (and interpretation) but,  up to interdefinability, it is uniquely determined; we will assume that the language and the interpretation are in some (unspecified) way canonically chosen and denote by $T_{\mathcal F}$ the complete  theory of the reduct. 

(iii) Whenever we say that two reducts of the same structure coincide, we will mean that they are interdefinable. 

(iv) For $\mathcal F=\{<\}$  the $\mathcal F$-reduct is the cml-reduct.
\end{Remark}

There are two natural  questions to ask at this point:

\begin{enumerate}[{  \rm  Question} 1.]
\item When does $T_{\mathcal F}$ eliminate quantifiers?
\smallskip \item When does the $\mathcal F$-reduct coincide with the full binary reduct?
\end{enumerate}

\begin{Definition}  We say that the set $\mathcal F$ is {\it $T$-complete} if for all $\lhd,\lhd'\in\mathcal F$ every $(\lhd,\lhd')$-monotone definable relation $R(x,y)$  is defined by a Boolean combination of unary formulae and $\mathcal F$-monotone formulae $\phi(x,y)$.
\end{Definition}  
We can now partially answer Question 1. 
\begin{Proposition}\label{prop general simon}  If $\mathcal F$ is  $T$-complete, then  $T_{\mathcal F}$  eliminates quantifiers.
\end{Proposition}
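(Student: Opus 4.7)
I plan to prove quantifier elimination directly. By the standard criterion it suffices to show that every formula of the form $\exists z\,\phi(z,\bar x)$ with $\phi$ quantifier-free in $L_{\mathcal F}$ is equivalent modulo $T_{\mathcal F}$ to a quantifier-free formula in $\bar x$. Distributing $\exists z$ over $\vee$ and pulling out conjuncts not involving $z$ reduces us to the case $\phi=\bigwedge_k\alpha_k$, a conjunction of literals each involving $z$. The unary literals in $z$ combine into a single constraint $z\in D$ for some $L$-definable unary $D$; each remaining literal has the form $R(z,x)^\epsilon$ or $R(x,z)^\epsilon$ for a parameter variable $x$ from $\bar x$ and an $\mathcal F$-monotone $R$, i.e.\ $(<,\lhd)$-monotone for some $\lhd\in\mathcal F$. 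By Fact~\ref{Fact_monotonicity}, the former puts $z$ in an initial or final part of $(\Mon,<)$ depending on $x$, and the latter in a final or initial part of $(\Mon,\lhd)$ depending on $x$.

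Hence $\phi(z,\bar x)$ asserts that $z\in D\cap\bigcap_{\lhd}K_\lhd$, where for each ordering $\lhd\in\mathcal F$ appearing (including $<$), $K_\lhd\subseteq\Mon$ is a $\lhd$-convex set given as an intersection of initial and final $\lhd$-parts. Since for any fixed linear order the initial (resp.\ final) parts are totally ordered by inclusion, a finite case-split indicating which parameter realizes the smallest initial and smallest final $\lhd$-part for each $\lhd$ reduces each $K_\lhd$ to at most two generating literals. The case-split conditions have the form $R(\Mon,x)\subseteq R'(\Mon,y)$ or, for second-argument fibers, $R(x,\Mon)\subseteq R'(y,\Mon)$; by Lemma~\ref{lemma monotonicity basic}(ii) (applied directly, or via Lemma~\ref{lemma monotonicity basic}(i) to the complement) each defines a $(\lhd,\lhd')$-monotone relation with $\lhd,\lhd'\in\mathcal F$ up to complementation, hence by $T$-completeness is quantifier-free in $T_{\mathcal F}$.

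It then remains to express the non-emptiness of the simplified intersection as a quantifier-free formula. A direct monotonicity analysis in the spirit of Lemma~\ref{lemma monotonicity basic}(ii)--(iii) (absorbing the unary set $D$ into the argument and iterating across the several orderings) shows that the non-emptiness of $D$ intersected with at most two extremal fibers per ordering is, up to complementation via Lemma~\ref{lemma monotonicity basic}(i), a $(\lhd,\lhd')$-monotone relation between the extremal parameters with $\lhd,\lhd'\in\mathcal F$; $T$-completeness then makes it quantifier-free. Reassembling the finite disjunction over all cases gives the required quantifier-free equivalent of $\exists z\,\phi$. The main technical obstacle will be the combinatorial bookkeeping -- keeping track, across multiple orderings of $\mathcal F$ and successive case-splits, of which $(\lhd,\lhd')$-monotone relations arise at each step; the conceptual key is that $T$-completeness is exactly the closure condition required so that every such relation can be absorbed into a Boolean combination of atoms of the reduct.
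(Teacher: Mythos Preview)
Your syntactic reduction is fine up to the point where you have collected the constraints on $z$: a unary set $D$, a family of $<$-initial/final parts coming from literals $R(z,x)^\epsilon$, and for each $\lhd\in\mathcal F$ that occurs, a family of $\lhd$-initial/final parts coming from literals $R(x,z)^\epsilon$. The gap is in the last paragraph. After the case-split you have up to two extremal fibers \emph{per ordering}, so if $m$ orderings occur the non-emptiness condition is a formula in up to $2m$ parameter variables, not two. It is therefore not ``a $(\lhd,\lhd')$-monotone relation between the extremal parameters'' in any sense that $T$-completeness (a statement about \emph{binary} relations) can handle, and the appeal to Lemma~\ref{lemma monotonicity basic}(iii) does not go through. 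Your phrase ``iterating across the several orderings'' does not explain how to reduce the arity; absorbing some parameters into $D$ makes $D$ parameter-dependent, so Lemma~\ref{lemma monotonicity basic}(iii) no longer yields a \emph{definable} monotone relation to which $T$-completeness applies.

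The missing idea, which the paper uses at the very start, is to eliminate the second-slot constraints altogether \emph{before} collecting fibers. If $R$ is $(<,\lhd)$-monotone then $(R^{-1})^c$, defined by $\lnot R(y,x)$, is $(\lhd,<)$-monotone by Lemma~\ref{lemma monotonicity basic}(i); since $\lhd,<\in\mathcal F$, $T$-completeness rewrites $\lnot R(y,x)$ (hence also $R(y,x)$) as a Boolean combination of unary formulae and $\mathcal F$-monotone atoms $R'(x,y)$ with the variable $x$ in the \emph{first} slot. Applying this to every literal $R(x,z)^\epsilon$ and redistributing $\exists z$ over the resulting disjunction, every constraint on $z$ becomes a $<$-initial or $<$-final part. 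Now only the single ordering $<$ appears, the extremal reduction leaves exactly one initial and one final fiber, and Lemma~\ref{lemma monotonicity basic}(iii) gives a genuinely binary $(\lhd_{j_0},\lhd_{j_1})$-monotone relation to which $T$-completeness applies. With this preprocessing step inserted, your argument goes through and is essentially the paper's proof recast syntactically.
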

\begin{proof}
We will adapt Simon's argument from the proof of Proposition 4.1 in \cite{Simon}. We operate in the $\mathcal F$-reduct $(\Mon,<,C_i,R_j)_{i\in I,j\in J}$ and  first we claim that  for all
$a_0$ and  $\bar a=(a_1,\ldots,a_n)$ the quantifier-free type 
$\mathrm{tp}_\mathrm{qf}(a_0/\bar a)$ is determined by:
\begin{itemize}
\item all colors $C_i(x)$, where $C_i(x)\in\tp(a_0)$, and 
\item all $R_j(x,a_{k})$ and $\neg R_j(x,a_{k})$   belonging to $\tp(a_0/\bar a)$.
\end{itemize}
Denote this type by $\pi_{\bar a}(x)$. It suffices to prove that each atomic formula from $\tp(a_0/\bar a)$ is implied by $\pi_{\bar a}(x)$. Each such a formula without parameters is (modulo $T_{\mathcal F}$) already in $\pi_{\bar a}(x)$ as $\{C_i\mid i\in I\}$ names all unary definable sets.
Note that $<$,  as  a  $(<,<)$-monotone relation,  is one of the $R_j$'s.
Hence it remains to show that whether $R_j(a_k,x)$ belongs to $\tp(a_0/\bar a)$ or not, is decided by $\pi_{\bar a}(x)$.  Fix such $j,k$ and 
suppose that $\phi(y,x):=R_j(y,x)$ is $(<,\lhd)$-monotone. Then the relation $(R_j^{-1})^c$ (defined by $\psi(x,y):=\lnot R_j(y,x)$) is $(\lhd,<)$-monotone by Lemma \ref{lemma monotonicity basic}(i). By $T$-completeness of $\mathcal F$, $\lnot R_j(y,x)$  is $T$-equivalent to a Boolean combination of unary and $\mathcal F$-monotone formulae $R_{s}(x,y)$, and this information is contained in $T_{\mathcal F}$. In particular, whether $R_j(a_k,x)$ or $\lnot R_j(a_k,x)$ belongs to $\tp(a_0/\bar a)$ is decided and  $\pi_{\bar a}(x)\cup T_{\mathcal F}\cup\mathrm{Diag}(\bar a)\vdash \mathrm{tp}_\mathrm{qf}(a_0/\bar a)$, where $\mathrm{Diag}(\bar a)$ denotes the quantifier-free diagram of $\bar a$ in the $\mathcal F$-reduct. This proves the claim.   

To prove the proposition, it is enough that for given $a_0$, $\bar a=(a_1,\ldots,a_n)$ and $\bar b=(b_1,\ldots,b_n)$ satisfying  $\bar a\equiv_\mathrm{qf}\bar b$  we find $b_0$ with $a_0\bar a\equiv_\mathrm{qf}b_0\bar b$; in other words, we should show that the set $\pi_{\bar b}(x)$, obtained by substituting $\bar b$ in place of $\bar a$ in $\pi_{\bar a}(x)$, is consistent.  
By compactness, it suffices to verify consistency of  finite subsets; fix a finite $\pi_0(x)\subseteq \pi_{\bar b}(x)$ consisting of:
\begin{itemize}
\item  colors $C_i(x)$ for $i\in I_0$;
\item  formulae  $R_{j}(x,b_{k})$ for  $(j,k)\in J^0\subset J\times \{1,\dots,n\}$;
\item  formulae  $\neg R_{j}(x,b_{k})$ for  $(j,k)\in J^1\subset J\times \{1,\dots,n\}$. 
\end{itemize}
Let $C(x)=\bigwedge_{i\in I_0} C_i(x)$. Further, consider   $(R_j(\Mon,a_k)\mid (j,k)\in J^0)$; by Fact \ref{Fact_monotonicity} it consists  of initial parts of $(\Mon,<)$, so let $R_{j_0}(\Mon,a_{k_0})$ be its minimal element. Then for every $(j,k)\in J^0$ we have $R_{j_0}(\Mon,a_{k_{0}})\subseteq R_{j}(\Mon,a_{k})$. 
By Lemma \ref{lemma monotonicity basic}(ii), $R_{j_0}(\Mon,x)\subseteq R_{j}(\Mon,y)$ defines a $(\lhd_{j_0},\lhd_{j})$-monotone relation (where $R_{j_0}$ is $(<,\lhd_{j_0})$-monotone and $R_j$ is $(<,\lhd_j)$-monotone). By $T$-completeness of $\mathcal F$, that relation is defined by a Boolean combination of  unary formulae and  those defining $\mathcal F$-monotone relations, so the information $R_{j_0}(\Mon,a_{k_{0}})\subseteq R_{j}(\Mon,a_{k})$ is contained in $\mathrm{tp}_\mathrm{qf}(\bar a)$ and hence in $\mathrm{tp}_\mathrm{qf}(\bar b)$, too. Hence $R_{j_0}(\Mon,b_{k_0})$ is the minimal element of $(R_j(\Mon,b_k)\mid (j,k)\in J^0)$. 
Similarly, we find a minimal element $\neg R_{j_1}(\Mon,b_{k_1})$   of $(\neg R_j(\Mon,b_k)\mid (j,k)\in J^1)$, corresponding to the minimal element $\lnot R_{j_1}(\Mon,a_{k_1})$ among final parts in  $(\lnot R_j(\Mon,a_k)\mid (j,k)\in J^1)$.

It remains  to find $b_0\in C(\Mon)\cap R_{j_0}(\Mon,b_{k_{0}})\cap \neg R_{j_1}(\Mon,b_{k_1})$. By Lemma \ref{lemma monotonicity basic}(iii), $R_{j_0}(\Mon,x)\cap \neg R_{j_1}(\Mon,y)\subseteq\lnot C(\Mon)$ defines a
$(\lhd_{j_0},\lhd_{j_1})$-monotone relation. 
  Since $a_0$ witnesses $R_{j_0}(\Mon,a_{k_{0}})\cap\neg R_{j_1}(\Mon,a_{k_1})\nsubseteq\lnot C(\Mon)$, and by $T$-completeness of $\mathcal F$ this is determined by $\mathrm{tp}_\mathrm{qf}(\bar a)$, we have $R_{j_0}(\Mon,b_{k_{0}})\cap \neg R_{j_1}(\Mon,b_{k_1})\nsubseteq \lnot C(\Mon)$, hence the desired $b_0$ exists.
\end{proof}

Clearly, the set $\mathcal F=\{<\}$  is   $T$-complete, so Simon's result on elimination of quantifiers in the cml-reduct is a special case of Proposition \ref{prop general simon}.
Yet another special case is when $\mathcal F=\{<,>\}$; that this one is $T$-complete easily follows by  Lemma \ref{lemma monotonicity basic}(i).

\begin{Corollary}\label{Corollary vece manje}  The set  $\mathcal F=\{<,>\}$ is $T$-complete and $T_{\mathcal F}$ eliminates quantifiers. 
\end{Corollary}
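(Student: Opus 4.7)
The plan is to verify $T$-completeness of $\mathcal F=\{<,>\}$ and then invoke Proposition \ref{prop general simon}. Since $\mathcal F$ has only two elements, there are only four pairs $(\lhd,\lhd')\in\mathcal F\times\mathcal F$ to check, and by the definition of $\mathcal F$-monotonicity the cases $(<,<)$ and $(<,>)$ are immediate: a $(<,<)$- or $(<,>)$-monotone definable relation is already $\mathcal F$-monotone and no Boolean combination is needed.

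The remaining two cases $(>,<)$ and $(>,>)$ are handled by passing to the complement. First I would note that Lemma \ref{lemma monotonicity basic}(i) says: if a relation $S$ is $(\lhd_1,\lhd_2)$-monotone, then $S^c$ is monotone with respect to the reversed pair. Hence, if $R$ is $(>,<)$-monotone, then $R^c$ is $(<,>)$-monotone, which is $\mathcal F$-monotone; and if $R$ is $(>,>)$-monotone, then $R^c$ is $(<,<)$-monotone, which is again $\mathcal F$-monotone. In both cases the defining formula of $R$ is the negation of an $\mathcal F$-monotone formula, so it is a (trivial) Boolean combination of $\mathcal F$-monotone formulae. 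This verifies $T$-completeness.

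With $T$-completeness in hand, Proposition \ref{prop general simon} immediately yields quantifier elimination for $T_{\mathcal F}$, which is the second assertion of the corollary. I do not expect any real obstacle: the work is just a case-by-case application of part (i) of Lemma \ref{lemma monotonicity basic} to cover the two pairs obtained by reversing the first coordinate, and then citing the previously established general criterion.
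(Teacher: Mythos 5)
Your proposal is correct and follows essentially the same route as the paper: the paper likewise disposes of the cases with reversed first coordinate by the complementation statement of Lemma \ref{lemma monotonicity basic}(i) (which flips both orders, turning a $(>,<)$- or $(>,>)$-monotone relation into the complement of an $\mathcal F$-monotone one) and then cites Proposition \ref{prop general simon}. Your write-up just makes the four-case check explicit where the paper states it in one line.
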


We  now turn to the case when $\mathcal F$ contains only relations of the form $<_{\vec E}$. Denote: 
\begin{itemize}
\item $\mathcal S_T^<$ -- the set of all orders of the form $<_{\vec E}$ for some admissible sequence of  definable equivalence relations $\vec E$;
\item $\mathcal W_T^<$ -- the set of all  $<_{\vec E}\in  \mathcal S_T^<$ for  $\vec E$ decreasing (recall Remark \ref{remark decreasing is admissible}(iii));
\item  A $\mathcal W_T^<$-monotone formulae will be called {\it weakly   monotone with respect to $<$},   and a $\mathcal S_T^<$-monotone formulae {\it special monotone with respect to $<$}. 
\end{itemize}
Whenever the meaning of $<$ is clear from the context, we will simply write  $\mathcal S_T$ and $\mathcal W_T$, or  say that a formula is weakly monotone (special monotone). 

Clearly, every weakly monotone formula is special monotone. Our main interest is in weakly monotone formulae  since $<_{\vec E}$ is easier to visualise  when  $\vec E$ decreases than  in the general  admissible case. Special formulae are introduced only to approximate weakly monotone ones and in the final part of the paper we will prove that the $\mathcal S_T$-reduct and the $\mathcal W_T$-reduct coincide, and that $\mathcal W_T$ is $T$-complete. However, the proof of $T$-completeness of  $\mathcal S_T$ is technically much simpler than that for $\mathcal W_T$.  
 
\begin{Proposition}\label{prop general via special} The set  $\mathcal S_T$ is $T$-complete. \end{Proposition}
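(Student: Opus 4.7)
My plan is to proceed by induction on $n := |\vec E|$, with $\vec E'$ arbitrary, showing that every $(\lhd_{\vec E}, \lhd_{\vec E'})$-monotone $L$-definable relation $R$ is a Boolean combination of unary and $\mathcal{S}_T$-monotone $L$-formulas. The base case $n = 0$ is immediate because then $\lhd_{\vec E}$ equals $<$, which makes $R$ itself $\mathcal{S}_T$-monotone.

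For the inductive step with $n \geq 1$, I would write $\vec E = (\vec F, E)$ where $E = E_{n-1}$ is $\lhd^-$-convex for $\lhd^- := \lhd_{\vec F}$, so that $\lhd_{\vec E} = (\lhd^-)_E$. I would then introduce two coarse auxiliary relations,
$$R_A(x,y) :\iff \forall x'\,(E(x,x') \to R(x',y)),\qquad R_B(x,y) :\iff \exists x'\,(E(x,x') \land R(x',y)),$$
recording whether $x$'s $E$-class is fully contained in, respectively meets, the fiber $R(\Mon,y)$. Using that $R(\Mon,y)$ is $(\lhd^-)_E$-initial and $E$ is $\lhd^-$-convex, one checks that the sets $R_A(\Mon,y)$ and $R_B(\Mon,y)$ are $\lhd^-$-initial parts of $\Mon$ growing monotonically with $y$ in $\lhd_{\vec E'}$; hence $R_A$ and $R_B$ are $(\lhd^-, \lhd_{\vec E'})$-monotone, and by the inductive hypothesis at first-argument length $n-1$ they are Boolean combinations of the required form.

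The residual $R \setminus R_A$ lives inside a single (but $y$-dependent) boundary $E$-class, and since $\lhd_{\vec E}$ reverses $\lhd^-$ within $E$-classes, its fiber there is a $\lhd^-$-final part of that class. The plan is to encode this by an auxiliary $S$ for which $R = R_A \cup (S \cap R_B)$ holds and which is $(\rhd^-, \lhd_{\vec E'})$-monotone. Granting such $S$, Remark~\ref{remark admissible reversed order} gives $\rhd^- = \lhd_{(\Mon \times \Mon,\, \vec F)} \in \mathcal{S}_T$, and Lemma~\ref{lemma monotonicity basic}(i) yields that $S^c$ is $(\lhd^-, \rhd_{\vec E'})$-monotone, with first-argument sequence of length $n-1$ and $\rhd_{\vec E'} = \lhd_{(\Mon \times \Mon,\, \vec E')} \in \mathcal{S}_T$; the inductive hypothesis then applies to $S^c$, and closure under complement of Boolean combinations delivers one for $S$, and in turn for $R$.

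The main obstacle will be the construction of $S$. Naive attempts like $S := R \cup \neg R_B$ satisfy the decomposition identity trivially but fail the global $(\rhd^-, \lhd_{\vec E'})$-monotonicity, essentially because the boundary class can jump with $y$, so that such an $S$ can strictly shrink on a newly-absorbed or newly-boundary class. I expect the correct $S$ to exploit the admissibility of $\vec E$ more carefully --- in particular, the fact that the $\Mon \times \Mon$-reversal keeps $\rhd^-$ inside $\mathcal{S}_T$ is exactly the feature that makes $T$-completeness of $\mathcal{S}_T$ easier to establish than that of $\mathcal{W}_T$, where the lack of reversals would force a substantially more involved argument to handle the analogous residual.
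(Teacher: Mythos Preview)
Your inductive scaffolding and the relations $R_A,R_B$ coincide with the paper's $R_1,R_2$ (the paper writes $c_1(y)=R_A(\Mon,y)$ and $c(y)=R_B(\Mon,y)$), so the first two thirds of the decomposition are exactly right. The genuine gap is the third piece: the auxiliary $S$ you want, with $R=R_A\cup(S\cap R_B)$ and $S$ $(\rhd^-,\lhd_{\vec E'})$-monotone, need not exist. A small example makes this transparent. Take $\Mon=\mathbb Z$, $<$ usual, $E$ with classes $\{2n,2n+1\}$, $\lhd_{\vec E'}={<}$, and let $R(\Mon,2n)=(-\infty,2n-1]$, $R(\Mon,2n+1)=(-\infty,2n-1]\cup\{2n+1\}$; this is $(<_E,<)$-monotone. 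The constraint $S\cap(R_B\setminus R_A)=R\setminus R_A$ forces $S(\Mon,2n+1)\cap\{2n,2n+1\}=\{2n+1\}$, so if $S(\Mon,2n+1)$ is a $<$-final part it must equal $[2n+1,\infty)$. But then $S(\Mon,1)=[1,\infty)\not\subseteq[3,\infty)=S(\Mon,3)$, destroying the required $<$-monotonicity in $y$. So the obstruction you noticed for the naive $S$ is intrinsic: no choice of $S$ with that monotonicity type works.

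The paper's remedy is not on the $x$-side but on the $y$-side: it keeps the first-argument order as $\prec=\lhd^-$ and instead modifies the second-argument order by a new $\lhd_{\vec E'}$-convex equivalence $F$ defined by $c(y)=c(z)\wedge c_1(y)=c_1(z)$ (i.e.\ ``same boundary class''). The third relation $R_3(x,y):=x\in c(y)\setminus c_2(y)$ is then shown to be $(\prec,(\lhd_{\vec E'})_F)$-monotone, and $R\equiv R_1\vee(R_2\wedge\neg R_3)$. This fits the induction because the first-argument sequence is still $\vec F$ of length $n-1$, while $(\vec E',F)$ is admissible (though generally not decreasing---this is precisely why $\mathcal S_T$-completeness is the easier statement). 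Your intuition that admissibility is the lever is right, but it acts on the $y$-coordinate, not by reversing $\prec$ on the $x$-coordinate.
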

\begin{proof}
We will  show that every definable  $(<_{\vec E_1},<_{\vec E_2})$-monotone relation, where $\vec E_1$ and $\vec E_2$ are definable  admissible sequences, is defined by a Boolean combination of unary and special monotone formulae. The proof is an easy induction on $|\vec E_1|$, once we prove the following claim.

\smallskip
\textsc{Claim.} Suppose that $\prec$ and $\lhd$ are  definable linear orders, $E$ is a  definable $\prec$-convex equivalence relation and $R$ is a definable $(\prec_E,\lhd)$-monotone relation. Then there exist  a definable $\lhd$-convex equivalence relation $F$,  $(\prec,\lhd)$-monotone relations $R_1$ and $R_2$, and a  $(\prec,\lhd_F)$-monotone relation $R_3$ such that $R(x,y)$ is equivalent to  $R_1(x,y)\lor (R_2(x,y)\land  \lnot R_3(x,y))$.

To prove the claim first note that by Fact \ref{Fact_monotonicity} the $\prec_E$-initial parts  $(R(\Mon,y)\mid y\in \Mon)$\,   $\subseteq$-increase   when $y$'s $\lhd$-increase. 
Denote by $c(y)$ the union of all $E$-classes meeting $R(\Mon,y)$, so $c(y)$'s are $\prec$-initial parts which $\subseteq$-increase when $y$'s $\lhd$-increase. 
Being a $\prec_E$-initial part,   $R(\Mon,y)$ consists of at most two $\prec$-convex components. Denote by $c_1(y)$ the one that consists of all $E$-classes that are completely contained in $R(\Mon,y)$ and the other by $c_2(y)=R(\Mon,y)\smallsetminus c_1(y)$; here we allow that either of them is void.
Like $c(y)$'s, $c_1(y)$'s are $\prec$-initial parts which $\subseteq$-increase when $y$'s $\lhd$-increase, as well. 
If $c_2(y)\neq\emptyset$ then $c(y)\smallsetminus c_1(y)$ is a single $E$-class (the one  properly met by $R(\Mon,y)$). Define:
$$R_1(x,y):=x\in c_1(y), \ \ \ R_2(x,y):=x\in c(y), \ \ \ R_3(x,y):=x\in c(y)\smallsetminus c_2(y).$$
Clearly, $\models  R(x,y)\Leftrightarrow (R_1(x,y)\lor (R_2(x,y)\land  \lnot R_3(x,y)))$. Since $c(y)$ and $c_1(y)$ are $\prec$-initial parts that $\subseteq$-increase when $y$'s $\lhd$-increase, $R_1$ and $R_2$ are  $(\prec,\lhd)$-monotone relations. 

Let $F(y,z)$ be defined by $c(y)=c(z)\land c_1(y)=c_1(z)$. Clearly, $F$ is an equivalence relation and it is easy to see that it is $\lhd$-convex:  $y\lhd t\lhd z$ and $F(y,z)$, by monotonicity of $R_1$ and $R_2$, imply $F(z,t)$. 
To complete the proof, it remains to show that $R_3$ is $(\prec,\lhd_F)$-monotone. Clearly, $R_3(\Mon,y)=c(y)\smallsetminus c_2(y)$ is a $\prec$-initial part. Assume  $y\lhd_Fz$ and we will prove  $R_3(\Mon,y)=c(y)\smallsetminus c_2(y)\subseteq c(z)\smallsetminus c_2(z)=R_3(\Mon,z)$.

\smallskip
Case 1. \ $(y,z)\in F$. In this case  we have  $c_1(z)=c_1(y)\subseteq c(y)=c(z)$ and $z\lhd y$. The latter, by monotonicity of $R$, implies $R(\Mon,z)\subseteq R(\Mon,y)$. Hence  $c_2(y)=R(\Mon,y)\smallsetminus c_1(y)\supseteq R(\Mon,z)\smallsetminus c_1(z)=c_2(z)$ and 
$c(y)\smallsetminus c_2(y)\subseteq c(z)\smallsetminus c_2(z)$.

\smallskip
Case 2. \ $(y,z)\notin F$. In this case  $y\lhd_F z$ implies $y\lhd z$, hence $R(\Mon,y)\subseteq R(\Mon,z)$, $c_1(y)\subseteq c_1(z)$ and $c(y)\subseteq c(z)$. 
We claim that $c(y)\subseteq c_1(z)$.  Otherwise, since these are initial parts, we have $c_1(z)\subsetneq c(y)$, so
 $c_1(y)\subseteq c_1(z)\subsetneq c(y)\subseteq c(z)$.  
Since  $c(t)\smallsetminus c_1(t)$ is either empty or is a single $E$-class and $c(t)$ and $c_1(t)$ are $E$-closed subsets, we conclude $c_1(y)=c_1(z)\subsetneq c(y)=c(z)$ and hence $(y,z)\in F$. A contradiction. 
Thus
 $c(y)\subseteq c_1(z)$, hence   $c(y)\smallsetminus c_2(y)\subseteq c(y)\subseteq c_1(z)\subseteq  c(z)\smallsetminus c_2(z)$ and we are done.

\smallskip
In both cases we reached the desired conclusion, completing the proof of the claim and the proposition. 
\end{proof}

\begin{Corollary}\label{Cor S_T reduct is wqom}
Let $T$ be a complete theory with a definable linear order $<$. The theory $T_{\mathcal S_T}$ is weakly quasi-o-minimal with respect to $<$ and eliminates quantifiers. 
\end{Corollary}
\begin{proof}
By Propositions \ref{prop general simon} and \ref{prop general via special} the theory $T_{\mathcal S_T}$ eliminates quantifiers. To show that $T_{\mathcal S_T}$ is weakly quasi-o-minimal with respect to $<$, let $D$ be a  parametrically $T_{\mathcal S_T}$-definable subset of $\Mon$ and $p\in S_1(T_{\mathcal S_T})$. 
Elimination of quantifiers implies that $D$ is a Boolean combination of unary, $<$-initial and $<_{\vec E}$-final sets; each of them has finitely many $<$-convex components on $p(\Mon)$ (recall that by Lemma \ref{remark convex set and new orders}, $<_{\vec E}$-convex sets have finitely many $<$-components). Therefore, $D$ has finitely many $<$-convex components on $p(\Mon)$, too. Thus condition  (2) from Proposition \ref{prop_WQOM_characterization}(i) is satisfied and the theory $T_{\mathcal S_T}$ is weakly quasi o-minimal with respect to $<$.
\end{proof}

\section{Definable orders in weakly quasi o-minimal theories}\label{S3}

In this section, we go back to the study of  weakly quasi-o-minimal theories begun in Section 2. The main result is Theorem \ref{thm wqom definable order} in which we closely describe total preorders definable on (elements of) the monster model $(\Mon,<,...)$ of a weakly quasi-o-minimal theory. However, most of our efforts are aimed at proving Proposition \ref{Prop_new order on p} in which we show that the only ``new'' definable linear orders on the locus of a complete 1-type are of the form $<_{\vec E}$.

\begin{Lemma}\label{Lema semiintervals}  
Suppose that $T$ is weakly quasi-o-minimal with respect to $<$, $p=\tp(a)$ and that $I(a)$ is a relatively $\{a\}$-definable $<$-initial part of $p(\Mon)$.

\begin{enumerate}[(i)]
\item If $a\in I(a)$, then there do not exist $a_0,b_0,a_1\models p$ such that $a_1<b_0<a_0$, $I(a_1)\supseteq I(a_0)$, and $I(b_0)<a_0$.

\item If $a\notin I(a)$, then there do not exist $a_0,b_0,a_1\models p$ such that $a_0<b_0<a_1$, $I(a_1)\subseteq I(a_0)$, and $a_0\in I(b_0)$.
\end{enumerate}
\end{Lemma}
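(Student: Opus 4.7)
My plan is to assume one of the forbidden configurations exists and iterate it by an automorphism, exactly in the style of Lemma \ref{Lema_equivalences in qwom}, until one produces a single parametrically definable set with infinitely many $<$-convex components on $p(\Mon)$; this then contradicts characterization (2) of weak quasi-o-minimality in Proposition \ref{prop_WQOM_characterization}. The crucial ingredient is that $I$ is given by a single $L$-formula $\iota(x,y)$ with $I(c)=\iota(\Mon,c)\cap p(\Mon)$ for every $c\models p$, so an automorphism $f$ that permutes realizations of $p$ also satisfies $f(I(c))=I(f(c))$, and the $a_0$-definable set $D:=\iota(a_0,\Mon)$ intersects $p(\Mon)$ in exactly $\{c\in p(\Mon)\mid a_0\in I(c)\}$.

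For (i), assume $a_1<b_0<a_0$ with $I(a_1)\supseteq I(a_0)$ and $I(b_0)<a_0$, and fix $f\in\mathrm{Aut}(\Mon)$ with $f(a_0)=a_1$. Set $a_{n+1}:=f(a_n)$ and $b_{n+1}:=f(b_n)$. Applying $f^n$ to the given inequalities and inclusions yields, by induction, $a_{n+1}<b_n<a_n$, $I(a_n)\supseteq I(a_{n-1})\supseteq\cdots\supseteq I(a_0)$, and $I(b_n)<a_n$ for every $n\geq 0$. The assumption $a\in I(a)$ transfers via an automorphism sending $a$ to $a_0$ to give $a_0\in I(a_0)$, hence $a_0\in I(a_n)$, so every $a_n$ lies in $D\cap p(\Mon)$. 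On the other hand $I(b_n)<a_n\leq a_0$ forces $a_0\notin I(b_n)$, so every $b_n$ lies outside $D$. The interleaved chain $\cdots<a_{n+1}<b_n<a_n<\cdots<a_1<b_0<a_0$ of realizations of $p$ alternates between $D$ and $D^c$, so $D$ has infinitely many $<$-convex components on $p(\Mon)$, contradicting Proposition \ref{prop_WQOM_characterization}(2).

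Part (ii) is dual: from $a_0<b_0<a_1$, $I(a_1)\subseteq I(a_0)$, $a_0\in I(b_0)$, iterating the same $f$ (with $f(a_0)=a_1$) gives an increasing chain $a_0<b_0<a_1<b_1<a_2<\cdots$ with $I(a_{n+1})\subseteq I(a_n)$ and $a_n\in I(b_n)$. The hypothesis $a\notin I(a)$ yields $a_0\notin I(a_0)\supseteq I(a_n)$, so no $a_n$ lies in $D$; and $a_0\leq a_n\in I(b_n)$ together with the fact that $I(b_n)$ is an initial part puts $a_0\in I(b_n)$, so every $b_n$ lies in $D$. Again $D$ alternates on an infinite chain in $p(\Mon)$, contradicting weak quasi-o-minimality. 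The only point requiring care, and the one I regard as the main bookkeeping obstacle, is the compatibility between the automorphism action, the parametrized family $I(\cdot)$, and the initiality of each $I(c)$ inside $p(\Mon)$; once that is in place, the argument is purely a matter of writing down the right sequence and tracing the chain of inclusions.
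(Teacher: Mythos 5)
Your proof is correct and follows essentially the same route as the paper: assume the forbidden configuration, iterate an automorphism with $f(a_0)=a_1$ to produce an alternating chain of realizations of $p$, and contradict Proposition \ref{prop_WQOM_characterization}(2) via the $a_0$-definable fiber $\iota(a_0,\Mon)$. The only difference is cosmetic: the paper first uses compactness to replace $\phi(x,y)$ by $\phi(x,y)\land\theta(x)$ so the inclusions hold absolutely, whereas you work directly with the relative sets $I(c)\subseteq p(\Mon)$, which suffices since only realizations of $p$ are ever substituted and $f(I(c))=I(f(c))$ holds by $\emptyset$-invariance of $p(\Mon)$.
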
 
\begin{proof}We will prove only part (i); part (ii) is proved similarly. 
Assume that $I(a)$ is relatively defined by $\phi(x,a)$. Then $a\in I(a)$ implies $\models\phi(a,a)$ for all $a\models p$. Toward a contradiction assume that $a_0,b_0,a_1\models p$ are such that $a_1<b_0<a_0$, $I(a_1)\supseteq I(a_0)$ and $I(b_0)<a_0$. Thus: $$\phi(\Mon,a_1)\cap p(\Mon)\supseteq \phi(\Mon,a_0)\cap p(\Mon)\ \ \ \mbox{ and }\ \ \ \phi(\Mon,b_0)\cap p(\Mon)<a_0.$$ 
By compactness there is a formula $\theta(x)\in p(x)$ such that:
$$\phi(\Mon,a_1)\cap \theta(\Mon)\supseteq \phi(\Mon,a_0)\cap \theta(\Mon)\ \ \ \mbox{ and }\ \ \ \phi(\Mon,b_0)\cap \theta(\Mon)<a_0.$$
By changing $\phi(x,y)$ with $\phi(x,y)\land\theta(x)$ we get: $\phi(x,a)$ relatively defines the initial part $I(a)$ of $p(\Mon)$ for all $a\models p$, $\models\phi(a,a)$ holds for all $a\models p$,   $\phi(\Mon,a_1)\supseteq\phi(\Mon,a_0)$, and $\phi(\Mon,b_0)<a_0$.

Choose $f\in Aut(\Mon)$ with $f(a_0)=a_1$, and define sequences $(a_n)_{n<\omega}$ and $(b_n)_{n<\omega}$ of realizations of $p$ by $a_{n+1}=f(a_n)$ and $b_{n+1}=f(b_n)$. By induction we see that $a_{n+1}<b_n<a_n$, $\phi(\Mon,a_{n+1})\supseteq \phi(\Mon,a_n)$, and $\phi(\Mon,b_n)<a_n$ hold for all $n<\omega$. 
Then  for all $n<\omega$ $\models\phi(a_0,a_n)\land \lnot\phi(a_0,b_n)$, so $a_0>b_0>a_1>b_1>a_2>\ldots$ alternately satisfy the formula $\phi(a_0,y)$ and its negation on $p(\Mon)$;   $\phi(a_0,\Mon)$ has infinitely many $<$-convex components on $p(\Mon)$, which contradicts Proposition \ref{prop_WQOM_characterization}(i)(2).
\end{proof}
The reader may notice that the proof of the previous lemma uses only a local version of the weak quasi-o-minimality:  every relatively $L_\Mon$-definable subset of $p(\Mon)$ has finitely many $<$-convex components within $p(\Mon)$. The same observation holds for the next proof, too.

\begin{Proposition}\label{Prop_new order on p} Suppose that $T$ is weakly quasi-o-minimal with respect to $<$.
If $\lhd$ is a  definable linear order on $\Mon$ and $p\in S_1(T)$, then there is     $<_{\vec E}\in \mathcal W_T$ agreeing with $\lhd$  on $p(\Mon)$. 
\end{Proposition}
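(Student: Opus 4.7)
The plan is to build $\vec E$ inductively, peeling off one $<$-convex equivalence at a time as we reduce the combinatorial complexity of $\lhd$-initial sets of $p(\Mon)$.

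The starting point is Corollary \ref{cor wqom fla is ui expressible} applied to $x\lhd y$: on $\Mon^2$ this formula is equivalent to a Boolean combination of unary and $<$-initial $L$-formulas, so for each $a\in p(\Mon)$ the set $I(a)=\{y\in p(\Mon):y\lhd a\}$ is a Boolean combination of a uniformly bounded collection of $<$-initial parts of $p(\Mon)$; in particular $I(a)$ has at most $n$ $<$-convex components on $p(\Mon)$ for some fixed $n$, which becomes the induction parameter. For $n=1$, each $I(a)$ is a single $<$-convex subset of $p(\Mon)$; using Lemma \ref{Lema semiintervals} together with the $\lhd$-monotonicity of the family $(I(a))_{a}$, I would conclude that either $I(a)$ is always $<$-initial (so $\lhd$ agrees with $<$ on $p(\Mon)$ and the empty sequence works) or always $<$-final (so $\lhd$ agrees with the reverse of $<$ on $p(\Mon)$, and $\vec E=(E)$ works for $E$ any $L$-definable convex equivalence relation agreeing with $p(\Mon)\times p(\Mon)$ on $p(\Mon)^2$, obtained via Lemma \ref{Lema definition of fincmop sets}(ii) and Remark \ref{remark admissible reversed order}).

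For the inductive step, I would isolate the outermost layer of disagreement between $<$ and $\lhd$ via a relatively definable $<$-convex equivalence $\sim_0$ on $p(\Mon)$, declaring $a\sim_0 b$ iff every $c\in p(\Mon)$ strictly $<$-outside the $<$-hull of $\{a,b\}$ satisfies $c\lhd a\Leftrightarrow c\lhd b$ and $a\lhd c\Leftrightarrow b\lhd c$. By construction this relation is $<$-convex; transitivity should follow from a three-point case analysis combined with Lemma \ref{Lema semiintervals}. Lemma \ref{Lema definition of fincmop sets}(ii) then produces an $L$-definable $<$-convex equivalence $E_0$ on $\Mon$ agreeing with $\sim_0$ on $p(\Mon)^2$, and by design $<$ and $\lhd$ induce the same order on the quotient $p(\Mon)/E_0$.

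To close the induction I would verify that inside each $E_0$-class of $p(\Mon)$, the $\lhd$-initial segments, considered now with respect to $<_{E_0}$ rather than $<$, have strictly fewer than $n$ $<_{E_0}$-convex components; the induction hypothesis applied with $<_{E_0}$ in place of $<$ then yields a $<_{E_0}$-admissible decreasing sequence $\vec E'$ of $L$-definable equivalences refining $E_0$ such that $(<_{E_0})_{\vec E'}$ matches $\lhd$ within every $E_0$-class, and prepending $E_0$ gives the required $<$-admissible decreasing sequence $\vec E$. The main obstacles I anticipate are establishing transitivity of $\sim_0$ and verifying the strict drop in the complexity parameter $n$ inside each $E_0$-class; both should hinge on Lemma \ref{Lema semiintervals} (which tightly controls relatively definable $<$-initial parts of $p(\Mon)$) together with the uniform finiteness given by Proposition \ref{prop_WQOM_characterization}(2).
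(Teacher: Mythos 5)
Your overall strategy is the same as the paper's (strip off one convex equivalence at a time, reverse inside its classes, and induct on the number of $<$-convex components of the sets $\{y:y\lhd a\}$ on $p(\Mon)$, which is uniformly bounded by completeness of $p$ and Proposition \ref{prop_WQOM_characterization}(2)). However, as written the proposal has genuine gaps, and they sit exactly where the paper does its work. First, neither convexity nor transitivity of $\sim_0$ is ``by construction'': if $a\sim_0 b$ and $a<d<b$, a point $c$ with $d<c\leq b$ lies outside the hull of $\{a,d\}$ but inside that of $\{a,b\}$, so nothing forces $c\lhd a\Leftrightarrow c\lhd d$; similarly, for $b<d<a$ the relations $a\sim_0 b$ and $b\sim_0 c$ put no constraint on witnesses in $(b,a)$, so transitivity is not formal either. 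Establishing that the ``middle block'' of each realization is a relatively definable convex equivalence is precisely the content of Claims 2--4 in the paper's proof, proved via Lemma \ref{Lema semiintervals} after the dichotomy of Claim 1 has been fixed; your sketch defers exactly this. Second, the assertion that ``by design $<$ and $\lhd$ induce the same order on the quotient'' is false in general: for $\lhd=<_{(\Mon^2,E)}$ (reverse everything, then reverse inside $E$-classes) the two quotient orders are opposite. The paper handles this with Claim 1 and the choice $\prec_0\in\{<,>\}$ (cf.\ Remark \ref{remark admissible reversed order}); your inductive step needs the same case distinction, not just the base case.

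Third, the way you invoke the induction hypothesis does not quite make sense: an $E_0$-class of $p(\Mon)$ is only $a$-definable and is not the locus of a complete type over $\emptyset$, so the statement being proved (which concerns $p(\Mon)$ with an $L$-definable order, and whose proof uses homogeneity of $p(\Mon)$ under $\mathrm{Aut}(\Mon)$) cannot simply be applied ``inside each $E_0$-class''; you must either measure the complexity drop globally on $p(\Mon)$ with respect to $<_{E_0}$ (this is what the paper does, showing $|\B_a|$ drops by exactly $2$ and that the hypothesis $(\dagger)$ persists for the new order, using weak quasi-o-minimality with respect to $<_{E_0}$ from Lemma \ref{Plem_change_of_order_by_equivalence}), or reformulate the induction over parameters. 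Finally, to get a \emph{decreasing} sequence $\vec E$ (as required for membership in $\mathcal W_T$) you need the ``moreover'' clause of Lemma \ref{Lema definition of fincmop sets}(ii) at every step, choosing each new $L$-definable equivalence finer than the previous one; prepending $E_0$ to an arbitrary $<_{E_0}$-admissible sequence does not automatically give a decreasing one. So the proposal is a reasonable outline of the paper's argument, but the claims it labels ``by construction/by design'' and the two obstacles it acknowledges are the actual proof, and one of its structural claims (same quotient order) fails without the reversal case.
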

\begin{proof} We will assume that $p$ is a non-algebraic type, as the statement is trivial for algebraic types.
First, we introduce a notation. Assume that $\prec$ is a definable linear order such that $T$ is weakly quasi-o-minimal with respect to $\prec$ (e.g.\ $\prec$ is $<$ or $<_{\vec E}$; see Lemma \ref{Plem_change_of_order_by_equivalence}).
Let $a$ realize $p$. Consider the subsets of $p(\Mon)$ relatively defined by $a\lhd x$ and $x\lhd a$. 
By Proposition \ref{prop_WQOM_characterization}(i), each of them has finitely many $\prec$-convex components on $p(\Mon)$  and all these components together with $\{a\}$ form a finite $\prec$-convex partition of $p(\Mon)$; denote this partition by $\mathcal C^\prec_a$ and note that $|\mathcal C^\prec_a|\geqslant 3$.
The partition $\mathcal C_a^\prec$ is ordered by $\prec$, and for every two $\prec$-consecutive members of the partition different from $\{a\}$, it holds that elements of one of them satisfy $a\lhd x$ while the elements of the other satisfy $x\lhd a$. Define:
\begin{itemize} 
\item $L_a^\prec$ is the $\prec$-leftmost and $R_a^\prec$ is the $\prec$-rightmost element of $\mathcal C_a^\prec$;
\item $\B^\prec_a$ is $\mathcal C_a^\prec\smallsetminus \{L_a^\prec,R_a^\prec\}$ and $B_a^\prec=\bigcup\B_a^\prec$; 
\item $l_a^\prec$ is the $\prec$-leftmost and $r_a^\prec$ is the $\prec$-rightmost element of $\B_a^\prec$.
\end{itemize}
By Lemma \ref{Lema definition of fincmop sets}(i),  each component from $\mathcal C^\prec_a$, as well as the union $B^\prec_a$, are relatively $a$-definable.
It is easy to see that $B_a^\prec$ is a  $\prec$-convex subset of $p(\Mon)$ containing $a$, $\B_a^\prec$ is a $\prec$-convex partition of $B_a^\prec$, while $L_a^\prec\prec B_a^\prec\prec R_a^\prec$ is a $\prec$-convex partition of $p(\Mon)$ (in particular, $L_a^\prec\prec a\prec R_a^\prec$ holds). 

The proof can be sketched as follows: we prove that $y\in B_x^\prec$ defines an equivalence relation on $p(\Mon)$, call it $E$. If $E$ is nontrivial (i.e.\ $B^\prec_a\neq\{a\}$) we prove $\mathcal B^{\prec_E}_a=\mathcal B^{\prec}_a\smallsetminus \{l_a^\prec ,r_a^\prec \}$. Starting with $\prec=<$ and continuing in this way, we obtain a decreasing sequence $\vec E$ and reach the case when $E$ is trivial; in that case $\lhd$ is either $<_{\vec E}$ or $>_{\vec E}$ and we are done.

\begin{Claim}
The following conditions are equivalent: \  \ (1) $|\mathcal C_a^\prec|=3$; \ (2) \ $r_a^{\prec}=\{a\}$; \ (3) \ $l_a^{\prec}=\{a\}$; \ (4) \ $B_a^{\prec}=\{a\}$; \ (5) \ Either $\prec$ or $\succ$ agrees with $\lhd$ on $p(\Mon)$. 
\end{Claim}
\begin{proof}[Proof of Claim 1]\endclaim
(5)$\Rightarrow$(1), (1)$\Rightarrow$(4), (4)$\Rightarrow$(2) and (4)$\Rightarrow$(3) are obvious.

(2)$\Rightarrow$(5)  If $r_a^{\prec}=\{a\}$, then $R_a^{\prec}=(a,+\infty)_\prec\, \cap\, p(\Mon)$. We have two cases to consider. The first is when $R^{\prec}_a$ is a component of $a\lhd x$, in which case $a\prec x\Rightarrow a\lhd x$ holds on $p(\Mon)$; since $\lhd$ and $\prec$ are linear orders and $a\in p(\Mon)$ is arbitrary, we conclude that $\lhd$ and $\prec$ agree on $p(\Mon)$. Similarly, in the second case, when $R^\prec_a$ is a component of $x\lhd a$, we get that $\lhd$ and $\succ$ agree on $p(\Mon)$.

(3)$\Rightarrow$(5) is proved similarly.  
\end{proof}

\begin{Claim}
Either $\forall x\in L_a^{\prec}\cup R_a^{\prec}(a\lhd x\Leftrightarrow a\prec x)$, \ or   \ $\forall x\in L_a^{\prec}\cup R_a^{\prec}(a\lhd x\Leftrightarrow x\prec a)$.
\end{Claim}
\begin{proof}[Proof of Claim 2]\endclaim
First, we find $a'$ with $B_a^{\prec}\prec B_{a'}^{\prec}$: Choose $b\in L_a^{\prec}$ and $f\in Aut(\Mon)$ with $B_a^\prec \prec f(b)$. Then $b\prec B_a^\prec\prec f(b)$ implies $b\prec B_a^{\prec}\prec f(b)\prec B_{f(a)}^{\prec}$,  so $a'=f(a)$ works. Note that $B_a^{\prec}\prec B_{a'}^{\prec}$ implies $a'\in R_{a}^{\prec}$ and $a\in L_{a'}^{\prec}$. 

We have two cases to consider and the first is when $a\lhd a'$ holds. In this case, $a'\in R_{a}^{\prec}$ and $a\lhd a'$ imply that $R_a^{\prec}$ is a $\prec$-convex component of $a\lhd x$; hence,  $\forall x\in R_a^{\prec}(a\lhd x\Leftrightarrow a\prec x)$ holds.  Similarly, $L_{a'}^{\prec}$ is a $\prec$-convex component of $x\lhd a'$, so $L_a^{\prec}$ is a $\prec$-convex component of $x\lhd a$ and we have $\forall x\in L_a^{\prec}(a\lhd x\Leftrightarrow a\prec x)$. Therefore, in the first case all elements of $L_a^{\prec}\cup R_a^{\prec}$ satisfy $a\lhd x\Leftrightarrow a\prec x$. The proof in the second case is similar: $a'\lhd a$ implies that all elements of $L_a^{\prec}\cup R_a^{\prec}$ satisfy   $a\lhd x\Leftrightarrow a\succ x$. 
\end{proof}

Our next task is proving that $y\in B_x^{\prec}$ defines an equivalence relation on $p(\Mon)$. That we will do assuming that the first option given by Claim 2 holds:
\begin{center}
\hfill all elements of $L_a^{\prec}\cup R_a^{\prec}$ satisfy \ $a\lhd x\Leftrightarrow a\prec x$.\hfill($\dagger$)
\end{center}
If the second option holds for $\prec$, then the first one holds for $\succ$, so $\mathcal C^{\prec}_x=\mathcal C^{\succ}_x$ and  $B^{\prec}_x=B^{\succ}_x$ imply the desired conclusion in that case, too. So till the end of proof of Claim 5, assume ($\dagger$). Also,  assume $B_a^{\prec}\neq \{a\}$; otherwise, $y\in B_x^{\prec}$ is a trivial equivalence relation.

 By Claim 1 we have $l_a^{\prec}\neq\{a\}\neq r_a^{\prec}$, so $L_a^{\prec}\prec l_a^{\prec}\prec a\prec r_a^{\prec}\prec R_a^{\prec}$. 
Condition ($\dagger$) implies that elements of $L_a^{\prec}$ satisfy $x\lhd a$, while elements of  $R_a^{\prec}$ satisfy $a\lhd x$. Since 
the sets $L_a^{\prec}\prec l_a^{\prec}$ are $\prec$-consecutive members of the partition $\mathcal C_a^{\prec}$, elements of $l_a^{\prec}$ must satisfy $a\lhd x$; similarly, elements of $r_a^{\prec}$ satisfy $x\lhd a$. 
  
In claims below,   $x,y,\ldots$  will denote realizations of $p$. 

\begin{Claim} $x\preccurlyeq y$ implies  $L_x^{\prec}\subseteq L_y^{\prec}$ and $R_x^{\prec}\supseteq R_y^{\prec}$.
\end{Claim}
\begin{proof}[Proof of Claim 3.]\endclaim
We first prove that $x\prec y$ implies $R_x^{\prec}\supseteq R_y^{\prec}$. Otherwise, since  $R_x^{\prec},\,R_y^{\prec}$ are final parts of $p(\Mon)$, there are $x\prec y$ with $R_x^{\prec}\subset R_y^{\prec}$; then $(R_y^{\prec})^c\subset (R_x^{\prec})^c$, where the complements are taken in $p(\Mon)$. Since $(R_y^{\prec})^c$, $(R_x^{\prec})^c$ are initial parts of $p(\Mon)$  and $r_x^{\prec}$ is a final part of $(R_x^{\prec})^c$, there is $z\in r_x^{\prec}$ with $(R_y^{\prec})^c\prec z$.
We {\em claim}  that $R_x^{\prec}\subseteq R_z^{\prec}$. Otherwise,   $R_z^{\prec}\subset R_x^{\prec}$ implies $(R_x^{\prec})^c\subset (R_z^{\prec})^c$ and, arguing as above, we find $t\in r_z^{\prec}$ with $(R_x^{\prec})^c\prec t$; in particular, $t\in R_x^{\prec}$. By ($\dagger$) we have: $t\in R_x^{\prec}$ implies $x\lhd t$, $t\in r_z^{\prec}$ implies $t\lhd z$, and $z\in r_x^{\prec}$ implies $z\lhd x$; a contradiction! Hence $R_x^{\prec}\subseteq R_z^{\prec}$, i.e. $(R_z^{\prec})^c\subseteq (R_x^{\prec})^c$.

 Consider the initial part $I(x)=(R_x^{\prec})^c$ of $p(\Mon)$; since $R_x^{\prec}$ is relatively definable, so is $I(x)$. We have $x\prec y\prec z$, $I(x)\supseteq I(z)$, and $I(y)\prec z$, which contradict  Lemma \ref{Lema semiintervals}(i) and prove  the $R^{\prec}$-part of the claim. 
The $L^{\prec}$-part  is proved analogously, except that for   a contradiction one should use Lemma \ref{Lema semiintervals}(ii)  with $I(x)=L_x^{\prec}$.
\end{proof}

\begin{Claim} 
$y\in B_x^{\prec}$ implies $B_y^{\prec}\subseteq B_x^{\prec}$. 
\end{Claim}
\begin{proof}[Proof of Claim 4.]\endclaim
Assume that $y\in B_x^{\prec}$ and $x\prec y$. Note that  $B_y^{\prec}\subseteq B_x^{\prec}$ is equivalent with the conjunction of $L_x^{\prec}\subseteq L_y^{\prec}$ and $R_x^{\prec}\subseteq R_y^{\prec}$. By Claim 3, $x\prec y$ implies $L_x^{\prec}\subseteq L_y^{\prec}$, so it remains to prove   $R_x^{\prec} \subseteq R_y^{\prec}$. Toward contradiction assume $R_y^{\prec} \subset R_x^{\prec}$.

As $y\in B_x^{\prec}$, there is $z\in r_x^{\prec}$ with $y\preccurlyeq z$. By Claim 3 we have $R_z^{\prec}\subseteq R_y^{\prec}$, which combined with $R_y^{\prec}\subset  R_x^{\prec}$ implies $R_z^{\prec}\subset R_x^{\prec}$, so there is $t\in r_z^{\prec}$ with $B_x^{\prec}\prec t$, i.e.\ $t\in R_x^{\prec}$. 
By ($\dagger$) we have: $t\in R_x^{\prec}$ implies $x\lhd t$,  $t\in r_z^{\prec}$ implies $t\lhd z$, and $z\in r_x^{\prec}$ implies $z\lhd x$; a contradiction! This proves the claim in the case $x\prec y$; the proof in the other case is analoguos. 
\end{proof}

\begin{Claim} 
$y\in B_x^{\prec}$ is an equivalence relation on $p(\Mon)$. \end{Claim}
\begin{proof}[Proof of Claim 5.]\endclaim
Toward contradiction, assume  $y\in B_x^{\prec}$ and $B_x^{\prec}\neq B_y^{\prec}$. By Claim 4 we have   $B_y^{\prec}\subseteq B_x^{\prec}$, so $B_x^{\prec}\not\subseteq B_y^{\prec}$ which, by Claim 4 again, implies $x\notin B_y^{\prec}$ i.e $x\in L_y^{\prec}\cup R_y^{\prec}$. We will assume 
that $x\in L_y^{\prec}$; the proof in the case   $x\in R_y^{\prec}$ is similar. 
$B_y^{\prec}\subseteq B_x^{\prec}$ implies that there is $z\in r_x^{\prec}$ with $y\preccurlyeq z$. 
By Claim 3 we have $L_y^{\prec}\subseteq L_z^{\prec}$, so $x\in L_y^{\prec}$ implies $x\in L_z^{\prec}$. By ($\dagger$) we have:  $x\in L_z^{\prec}$ implies $x\lhd z$ and  $z\in r_x^{\prec}$ implies $z\lhd x$; a contradiction!
\end{proof}
We have just proved that $y\in B^\prec_x$ is an  equivalence relation on $p(\Mon)$, so we drop the additional assumptions  ($\dagger$) and $B_a^{\prec}\neq\{a\}$. Since the relation $y\in B^\prec_x$ is relatively definable and its classes $B_x^\prec$ are $\prec$-convex on $p(\Mon)$, by Lemma 
\ref{Lema definition of fincmop sets}(ii) there is a definable $\prec$-convex equivalence relation $E$ relatively defining $y\in B^\prec_x$ on $p(\Mon)$. 

\begin{Claim} If $E$ is nontrivial ($B_a^{\prec}\neq\{a\}$), then \ 
$L_a^{\prec_E}=L_a^\prec\cup r_a^\prec$, \  $R_a^{\prec_E}=R_a^\prec\cup l_a^\prec$, \ and \ $\mathcal B_a^{\prec_E}=\mathcal B^\prec_a\smallsetminus \{l_a^\prec,r_a^\prec\}$.
\end{Claim}
\begin{proof}[Proof of Claim 6.]\endclaim
Consider $\prec_E$ and recall that it reverses the order $\prec$ within each $E$-class and keeps the order of $E$-classes. It is not hard to see that all the elements $L_a^{\prec}\prec l_a^{\prec}\prec\ldots\prec  a\prec\ldots\prec r_a^{\prec}\prec R_a^{\prec}$ of  the partition  $\mathcal C_a^\prec$  
remain $\prec_E$-convex with $L_a^{\prec}\prec_E r_a^{\prec}\prec_E\ldots\prec_E a\prec_E\ldots\prec_E l_a^{\prec}\prec_E R_a^{\prec}$; here $\prec_E$ reverses the $\prec$-order on elements of $\B_a^\prec$ because they all lie in the same $E$-class. 

By Claim 2 $L_a^{\prec}$ and $R_a^{\prec}$ are  $\prec$-components of different ($x\lhd a$ or $a\lhd x$) kinds. The same holds for $R_a^{\prec}$ and $r_a^{\prec}$, since they are consecutive.
Hence, $L_a^{\prec}$ and $r_a^{\prec}$ are  $\prec$-components of the same kind. Since they are $\prec_E$-consecutive, $L_a^{\prec}\cup r_a^{\prec}$ is a single $\prec_E$-component. Similarly, $R_a^{\prec}\cup l_a^{\prec}$ is a $\prec_E$-component. 
\end{proof}
 
 Now, we can finish the proof of the proposition.
 Construct a sequence $\prec_0,\prec_1,\ldots$ of definable linear orders and a decreasing sequence $E_1,E_2,\ldots$ of definable $<$-convex equivalence relations in the following way:  Start with $\prec_0:=<$ and let  $E_1$ be a definable $<$-convex equivalence relation relatively defining $y\in B_x^{\prec_{0}}$ on $p(\Mon)$.  If $E_1$ is trivial on $p(\Mon)$, then by Claim 1 $\lhd$ agrees with either $<$ or $>=<_{\Mon\times\Mon}$ on $p(\Mon)$; in either case we are done.  So assume that $E_1$ is nontrivial and let $\prec_1:=(\prec_0)_{E_1}$. By Claim 6 we have  $\B_x^{\prec_1}\subset \B_x^{\prec_0}$, so the equivalence relation defined by $y\in \B_x^{\prec_{1}}$ refines $E_1$ on $p(\Mon)$. By Lemma \ref{Lema definition of fincmop sets}(ii), there is a definable, convex equivalence relation $E_2$ which refines $E_1$ and relatively defines $y\in \B_x^{\prec_{1}}$ within $p(\Mon)$. If $E_2$ is trivial on $p(\Mon)$, then by Claim 1 $\lhd$ agrees with either $<_{E_1}$ or its reverse. The latter order is $(<_{E_1})_{\Mon\times \Mon}=<_{(\Mon\times\Mon,E_1)}$; in either case we are done. Assuming that $E_2$ is nontrivial we continue by defining $\prec_2:=(\prec_1)_{E_2}$\ldots. By Claim 6 at each step of the construction the size of $\B_x^{\prec_k}$ decreases by 2, so the construction has to stop after 
 $n=(|\B_x^<|-1)/2$ steps, i.e. $E_n$ is trivial on $p(\Mon)$. Then   $\lhd$ agrees with either $<_{(E_1,\ldots, E_{n-1})}$ or  $<_{(\Mon\times\Mon, E_1,\ldots, E_{n-1})}$ on $p(\Mon)$; in either case we are done.
\end{proof}

For a total preorder $(X,\peq)$, let $E_\peq$ be an equivalence relation defined  by $a\peq b\land b\peq a$; then   $\peq/E_\peq$    linearly orders  $X/E_\peq$.

\begin{Corollary}\label{cor total preorder on a type} Suppose that $T$ is weakly quasi-o-minimal with respect to $<$ and that $\peq$ is a definable total preorder. Then for each type $p\in S_1(T)$ there exist a definable convex equivalence relation $F$ agreeing with $E_\peq$ on $p(\Mon)$, and an order $<_{\vec E}\in \mathcal W_T$ such that on $p(\Mon)$: 
\begin{center}
$x\peq y$ \ \ \ is equivalent with \ \ \ $F(x,y)\lor (\lnot F(x,y)\land x<_{\vec E}y)$.
\end{center}
\end{Corollary}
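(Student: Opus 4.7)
The plan is to reduce the corollary to Proposition \ref{Prop_new order on p} by first replacing the preorder $\peq$ with an auxiliary definable linear order that refines it. Concretely, I would define on $\Mon$:
\[
x \lhd y \ \ \text{ iff }\ \ \bigl(\lnot E_\peq(x,y) \land x \peq y\bigr) \ \lor\ \bigl(E_\peq(x,y) \land x < y\bigr).
\]
Note $E_\peq$ is $\emptyset$-definable (it is $x\peq y\land y\peq x$), so $\lhd$ is an $L$-definable relation. I would then verify routinely that $\lhd$ is a linear order on $\Mon$: totality follows from the totality of $\peq$ (when $x,y$ are $E_\peq$-inequivalent) combined with totality of $<$ (when they are $E_\peq$-equivalent), and transitivity is a straightforward case analysis on whether consecutive pairs are $E_\peq$-related. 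In effect, $\lhd$ is a lexicographic refinement: it agrees with the strict part of $\peq$ across $E_\peq$-classes and uses $<$ to break ties inside each class.

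Now I would apply Proposition \ref{Prop_new order on p} to the definable linear order $\lhd$ and the type $p\in S_1(T)$ to obtain some $<_{\vec E}\in \mathcal W_T$ agreeing with $\lhd$ on $p(\Mon)$. It remains to check the stated equivalence on $p(\Mon)$: for $x,y\models p$, split into the two cases $E_\peq(x,y)$ and $\lnot E_\peq(x,y)$. In the first case both $x\peq y$ and the right-hand side hold automatically. In the second case $x\peq y$ is equivalent to the strict part $x\prec y$, which by construction is equivalent to $x\lhd y$, which on $p(\Mon)$ is equivalent to $x<_{\vec E}y$; hence the right-hand side again matches.

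I do not expect a substantive obstacle here; the content of the statement is packed into Proposition \ref{Prop_new order on p}, and the corollary is essentially a device for converting the preorder case to the order case. The only care needed is the definition of the auxiliary order $\lhd$ and the verification that it is a bona fide definable linear order, which is the routine lexicographic check indicated above.
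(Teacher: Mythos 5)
Your proposal is correct and follows essentially the same route as the paper: the paper introduces exactly the same auxiliary relation $R(x,y):=(E_\peq(x,y)\land x<y)\lor(\lnot E_\peq(x,y)\land x\peq y)$ and then feeds it into Proposition \ref{Prop_new order on p}. The only (harmless) difference is that the paper verifies linearity of $R$ merely relatively on $p(\Mon)$ (using convexity of $E_\peq$-classes from Lemma \ref{Lema_equivalences in qwom}) and then invokes Lemma \ref{Lema definition of fincmop sets}(iii) to get a globally definable order, whereas your observation that the lexicographic refinement is already a bona fide definable linear order on all of $\Mon$ is correct and renders that extension step unnecessary.
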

\begin{proof}
Let $R(x,y):= (E_\peq(x,y)\land x<y)\lor (\lnot E_\peq(x,y)\land x\peq y)$.  \ 
By Lemma \ref{Lema_equivalences in qwom}, for any $p\in S_1(T)$, each $E_\peq$-class is $<$-convex on $p(\Mon)$, thus we can easily see that $R$ relatively defines a linear order on $p(\Mon)$. By Lemma \ref{Lema definition of fincmop sets}(iii) there exists a definable linear order $\lhd$  agreeing with $R$ on $p(\Mon)$. By Proposition \ref{Prop_new order on p}   $\lhd$ agrees with some $<_{\vec E}\in\mathcal W_T$ on $p(\Mon)$. Clearly, $x\peq y$ and $E_\peq(x,y)\lor(\lnot E_\peq(x,y)\land x<_{\vec E}y)$ are equivalent on $p(\Mon)$. By Lemma \ref{Lema_equivalences in qwom} the relation $E_\peq$ is convex on $p(\Mon)$, so by Lemma \ref{Lema definition of fincmop sets}(ii) there exists a definable, convex equivalence relation $F$ agreeing with $E_\peq$ on $p(\Mon)$. Hence $x\peq y$ and $F(x,y)\lor(\lnot F(x,y)\land x<_{\vec E}y)$ are equivalent on $p(\Mon)$.
\end{proof}

\begin{Theorem}\label{thm wqom definable order} Suppose that $T$ is weakly quasi-o-minimal with respect to $<$ and that $\peq$ is a definable total preorder on $\Mon$. Then there are a definable partition $\Mon= D_1\cup\ldots\cup D_n$ and orders  $<_{\vec E_1},\ldots,<_{\vec E_n}\in \mathcal W_T$ such that
\begin{center}
$x\peq y$ \ \ \ is equivalent with \ \ \ $E_\peq(x,y)\lor (\lnot E_\peq(x,y)\land x<_{\vec E_i}y)$
\end{center}
holds on $D_i$, while the restriction of $E_\peq$ is convex on $D_i$  for all $i=1,\ldots,n$.

Moreover, if $\peq$ is a total order, then the partition and the orders can be chosen so that $\prec$ and $<_{\vec E_i}$ agree on each $D_i$.
\end{Theorem}
\begin{proof}
By Corollary \ref{cor total preorder on a type}, for every $p\in S_1(T)$ there exist a definable convex equivalence relation $F_p$ agreeing with $E_\peq$ on $p(\Mon)$ and a finite, decreasing sequence $\vec E_p$ of definable $<$-convex equivalence relations such that $\peq$ agrees with $F_p(x,y)\lor(\lnot F_p(x,y)\land x<_{\vec E_p}y)$ on $p(\Mon)$. By compactness the same holds on $\theta_p(\Mon)$ for some $\theta_p\in p$. Since $\{[\theta_p]\mid p\in S_1(T)\}$ covers $S_1(T)$, by compactness we can extract a finite subcover $\{[\theta_{p_1}],\ldots,[\theta_{p_n}]\}$. By taking $D_k$ to be the set defined by $\theta_{p_k}\land\bigwedge_{i<k}\neg\theta_{p_i}$, for all $k$, we obtain a finite definable partition of $\Mon$. Since $\peq$ and $F_{p_k}(x,y)\lor(\lnot F_{p_k}(x,y)\land x<_{\vec E_{p_k}}y)$ agree on $\theta_{p_k}(\Mon)$, they also agree on $D_k$. This completes the proof of the first part of the theorem as $F_{p_k}$ agrees with $E_{\peq}$ on $\theta_{p_k}(\Mon)$ (and hence on $D_k$).
The second part is an easy consequence of the first one because $E_\peq$ is the equality for total orders $\peq$.
\end{proof}
 
\section{Proof of Theorem 1}

\begin{Definition} 
\begin{enumerate}[(a)]
\item A theory $T$ with a definable linear order $<$ is {\em almost weakly monotone (almost special monotone) with respect to $<$} if its full binary reduct coincides with the monotone $\mathcal W_T$-reduct (monotone $\mathcal S_T$-reduct).
\item An $\aleph_0$-saturated structure $(\Mon,\ldots)$ with a definable linear order $<$ is {\em weakly monotone (special monotone) with respect to $<$} if for all finite $A\subseteq\Mon$ the theory $T_A$ is almost weakly monotone (almost special monotone) with respect to $<$.
\item A theory $T$ with a definable linear order $<$ is {\em weakly monotone (special monotone) with respect to $<$} if it has a weakly monotone (special monotone) $\aleph_0$-saturated  model.
\end{enumerate}
\end{Definition}

\begin{Lemma}\label{cor initial formula is um definabilna} Suppose that $T$ is weakly quasi-o-minimal with respect to $<$. Then every initial formula $\phi(x;y)$ is equivalent to a Boolean combination of unary and weakly monotone formulae of the form $\psi(x,y)$.
\end{Lemma}
\begin{proof}
Let $\phi(x;y)$ be an initial formula. Define $y_1\peq y_2$ by $\phi(\Mon,y_1)\subseteq \phi(\Mon,y_2)$. Since  initial parts are $\subseteq$-comparable, $\peq$ is a total preorder on $\Mon$. By Theorem \ref{thm wqom definable order} there is a definable partition $\Mon=D_1\cup\ldots\cup D_n$ 
and orders  $<_{\vec E_1},\ldots,<_{\vec E_n}\in \mathcal W_T$ such that   $y_1\peq y_2$ is equivalent to $E_{\peq}(y_1,y_2)\lor (\lnot E_\peq(y_1,y_2)\land y_1<_{\vec E_i}y_2)$ on $D_i$ for every $i=1,\ldots,n$.
We {\it claim}  that $\models (\phi(x,y)\land y\in D_i)\Leftrightarrow(\psi_i(x,y)\land y\in D_i)$ holds for some   weakly monotone formula $\psi_i(x,y)$ defining a $(<,<_{\vec E_i})$-monotone  relation.
To prove it, we first show that $\phi(x,y)\land y\in D_i$ defines a monotone relation from $(\Mon,<)$ to $(D_i,<_{\vec E_i})$:  for $b_1,b_2\in D_i$, $b_1<_{\vec E_i}b_2$ clearly implies $b_1\peq b_2$ i.e.\ $\phi(\Mon,b_1)\subseteq \phi(\Mon,b_2)$, so $(\phi(\Mon,b_i)\mid b_i\in D_i)$ is a $\subseteq$-increasing sequence of initial parts of $\Mon$. Hence $\phi(x,y)\land y\in D_i$ defines a monotone relation from $(\Mon,<)$ to $(D_i,<_{\vec E_i})$. 
Extend it  to a  $(<,<_{\vec E_i})$-monotone relation defined by: 
$$\psi_i(x,y):= \exists u,v(\phi(u,v)\land v\in D_i\land x\leqslant u\land v\leqslant_{\vec E_i}y).$$
Clearly, the formula $\psi_i(x,y)$ is weakly monotone and satisfies the claim. It follows that 
\ $\models \phi(x,y)\Leftrightarrow \bigvee_{i=1}^n(\psi_i(x,y)\land y\in D_i)$.
\end{proof}

\begin{Proposition}\label{thm mon iff wqom} Let $T$ be a complete theory with a definable linear order $<$. The following conditions are equivalent:

\begin{enumerate}[(1)]
\item $T$ is weakly quasi-o-minimal with respect to $<$;
\item For all $A$, each $L_A$-formula $\phi(x,y)$ is equivalent to a Boolean combination of unary and weakly monotone $L_A$-formulae of the form $\psi(x,y)$;
\item $T$ is weakly monotone with respect to $<$;
\item $T$ is special monotone with respect to $<$.
\end{enumerate}

\end{Proposition}
\begin{proof}
(1)$\implies$(2) Let $T$ be weakly quasi-o-minimal with respect to $<$. Fix $A\subseteq\Mon$ and an $L_A$-formula $\phi(x,y)$. By Remark \ref{rmk naming parameters and qwom} the theory $T_A$ is weakly quasi-o-minimal with respect to $<$, so by 
Corollary \ref{cor wqom fla is ui expressible} (applied to $T_A$), $\phi(x,y)$ is equivalent to a Boolean combination of unary and  initial $L_A$-formulae of the form $\theta(x;y)$. By Lemma \ref{cor initial formula is um definabilna} (applied to $T_A$), each $\theta(x,y)$ is equivalent  to a Boolean combination of unary and  weakly monotone $L_A$-formulae of the form $\psi(x,y)$; so is  $\phi(x,y)$.

\smallskip
(2)$\implies$(3) is easy and for (3)$\implies$(4)  it suffices to note that binary sets definable in the $\mathcal W_{T_A}$-reduct are definable in the $\mathcal S_{T_A}$-reduct, too.

\smallskip
(4)$\implies$(1): Suppose that $T$ is special monotone with respect to $<$. We will verify condition  (2) from Proposition \ref{prop_WQOM_characterization}(i): By induction on $|\bar a|=n$ we prove that every $\bar a$-definable set has finitely many convex components on the locus of any $p\in S_1(T)$. The case $n=0$ is clear, so assume that it holds for $n=|\bar a|$. It suffices to  prove that $\phi(\Mon,b,\bar a)$ has finitely many components on $p(\Mon)$. By special monotonicity, the formula $\phi(x,y,\bar a)$ is a Boolean combination of $L_{\bar a}$-formulae: unary of the form $\theta_1(x)$ and $\theta_2(y)$, and $\mathcal S_{T_{\bar a}}$-monotone of the form $\psi_1(x,y)$ and $\psi_2(y,x)$. 
By the induction hypothesis each $\theta_1(\Mon)$ has finitely many convex components on $p(\Mon)$; the same holds for $\psi_1(\Mon,b)$ since it is a $<$-initial part. As for $\psi_2(b,\Mon)$, which is a final $<_{\vec E}$-part, it holds by Lemma \ref{remark convex set and new orders}. Then   $\phi(\Mon,b,\bar a)$, being a Boolean combination of these sets, has finitely many components on $p(\Mon)$.  
\end{proof}

\setcounter{TheoremI}{0}
\begin{TheoremI}\label{theorem 1}
Let $T$ be a complete first-order theory with infinite models.
\begin{enumerate}[(i)]
\item $T$ is weakly quasi-o-minimal  if and only if it is weakly monotone.
\item Weak monotonicity (weak quasi-o-minimality) of $T$ does not depend on the choice of a definable linear order: if $T$ is weakly monotone with respect to one definable linear order, then it is so with respect to any other.  
\end{enumerate}
\end{TheoremI}
\begin{proof}
Part (i) follows by the equivalence of conditions (1) and (3) in  Proposition \ref{thm mon iff wqom}. To prove part (ii), suppose that $T$ is weakly quasi-o-minimal with respect to $<$  and let $\lhd$ be another definable linear order. By Theorem \ref{thm wqom definable order} there is a definable partition $\Mon= D_1\cup\ldots\cup D_n$   and orders  $<_{\vec E_1},\ldots,<_{\vec E_n}\in \mathcal W_T$ such that $\lhd$ agrees with $<_{\vec E_i}$ on $D_i$ for all $i=1,\ldots,n$.
To  prove that $T$ is weakly quasi-o-minimal with respect to $\lhd$ by Proposition \ref{prop_WQOM_characterization}(i) it suffices to show  that any $\Mon$-definable   set $D\subseteq \Mon$ has only finitely many $\lhd$-convex components on $p(\Mon)$, so let us fix such $D$ and $p$. The locus $p(\Mon)$ is contained in a single $D_k$. By Lemma \ref{Plem_change_of_order_by_equivalence}, $T$ is weakly quasi-o-minimal with respect to $<_{\vec E_k}$, so $D$ has only finitely many $<_{\vec E_k}$-convex components on $p(\Mon)$ by Proposition \ref{prop_WQOM_characterization}(i). Since $\lhd$ agrees with $<_{\vec E_k}$ on $D_k$ and $p(\Mon)\subseteq D_k$, we conclude that $D$ has finitely many $\lhd$-convex components on $p(\Mon)$, as desired.
\end{proof}

\section{Proof of Theorem 2}

 \begin{Proposition}\label{Prop W_Tis S_T}
Suppose that  $T$ has a definable order $<$. Then the $\mathcal W_T$-reduct and the $\mathcal S_T$-reduct coincide. The family $\mathcal W_T$ is  $T$-complete and the theory $T_{\mathcal W_T}$ eliminates quantifiers.    
\end{Proposition}
\begin{proof} For a set $\mathcal F$ of definable linear orders let $Def(\mathcal F)$ denote the collection of all definable sets in the  $\mathcal F$-reduct. We will prove $Def(\mathcal W_T)= Def(\mathcal S_T)$. We claim:
\begin{equation}Def(\mathcal S_{T_{\mathcal S_T}})=Def(\mathcal W_{T_{\mathcal S_T}})\subseteq Def(\mathcal W_T)\subseteq Def(\mathcal S_T).
\end{equation}
Since $T_{\mathcal S_T}$ is a reduct of $T$, we have $\mathcal W_{T_{\mathcal S_T}}\subseteq  \mathcal W_T \subseteq  \mathcal S_T$ which implies  the inclusions in (1).
To justify the equality, first note that $T_{\mathcal S_T}$ is weakly quasi-o-minimal  by Corollary \ref{Cor S_T reduct is wqom},  so $Def(\mathcal S_{T_{\mathcal S_T}})=Def(\mathcal W_{T_{\mathcal S_T}})$  holds by Proposition \ref{thm mon iff wqom}.

Let $<_{\vec E}\in\mathcal S_T$.  The order $<_{\vec E}$ is clearly a $(<_{\vec E},<_{\vec E})$-monotone relation, so by $T$-complete\-ness of $\mathcal S_T$ (Proposition \ref{prop general via special}) $x<_{\vec E} y$ is definable by a Boolean combination of unary and $\mathcal S_T$-monotone formulae of the form $\psi(x,y)$; this implies  that $<_{\vec E}$ is definable in the $\mathcal S_T$-reduct. If $\vec E=(E_1,\ldots,E_{n})$, then we easily see that each $E_i$ is definable in terms of $<,<_{E_1},<_{(E_1,E_2)},\dots,<_{(E_1,E_2,\dots,E_{n})}$, so it is also definable in the  $\mathcal S_T$-reduct and  $<_{\vec E}\in \mathcal S_{T_{\mathcal S_T}}$. Therefore $\mathcal S_T\subseteq \mathcal S_{T_{\mathcal S_T}}$  and  $Def(\mathcal S_T)\subseteq Def(\mathcal S_{T_{\mathcal S_T}})$. Combining with (1), we get $Def(\mathcal W_T)= Def(\mathcal S_T)$; the $\mathcal W_T$-reduct and the $\mathcal S_T$-reduct coincide.

\smallskip
Finally, we prove $T$-completeness of $\mathcal W_T$. Let $R$ be a definable $(<_{\vec E},<_{\vec F})$-monotone relation, where $<_{\vec E},<_{\vec F}\in\mathcal W_T$.
Then $T$-completeness of $\mathcal S_T$ implies  that $R(x,y)$ is defined by a Boolean combination of unary and $\mathcal S_T$-monotone formulae of the form $\psi(x,y)$. Each of these formulae is named in the language of the theory $T_{\mathcal S_T}$ which is weakly quasi-o-minimal. By the equivalence of conditions (1) and (2) from Proposition  \ref{thm mon iff wqom} (applied to $T_{\mathcal S_T}$) each $\psi(x,y)$ is equivalent to a Boolean combination of unary and $\mathcal W_{T_{\mathcal S_T}}$-monotone formulae of the form $\theta(x,y)$. Since each $\mathcal W_{T_{\mathcal S_T}}$-monotone formula is also $\mathcal W_T$-monotone,  the formula $R(x,y)$ is equivalent to a Boolean combination of unary and $\mathcal W_T$-monotone formulae of the form $\theta(x,y)$; \ $\mathcal W_T$ is $T$-complete.

The last statement now follows from Proposition \ref{prop general simon}.
\end{proof}

\begin{TheoremI}\label{theorem 2}
Let $T$ be a weakly monotone theory  with a definable linear order $<$. 
\begin{enumerate}[(i)]
 \item  Every definable subset of $\Mon^2$ is defined by a Boolean combination of unary formulae and binary weakly monotone formulae.
 \item The theory of the full binary reduct  eliminates quantifiers. 
 \item If $\prec$ is a definable linear order on $\Mon$, then there are a  definable partition $\Mon= D_1\cup\ldots\cup D_n$   and orders  $<_{\vec E_1},\ldots,<_{\vec E_n}\in \mathcal W_T$ such that 
 $\prec$ agrees with $<_{\vec E_i}$ on $D_i$ for all $i=1,\ldots,n$.
\item (Weak monotonicity theorem)
For every definable set $D\subseteq \Mon$ and definable function $f:D\to\overline{\Mon}$ 
there exist  a definable partition $D=D_1\cup\dots\cup D_n$ and orders $<_{\vec E_1},\dots,<_{\vec E_n}\in\mathcal W_T$ such that each restriction $f\upharpoonright D_i$, viewed as a function from $(D_i,<_{\vec E_i})$ into $(\overline\Mon,<)$, is increasing for all $i=1,\ldots,n$.
\end{enumerate}
  \end{TheoremI}
\begin{proof}
(i) follows from Proposition \ref{thm mon iff wqom}(2).

\smallskip
(ii) Since $T$ is weakly monotone the full binary reduct and the $\mathcal W_T$-reduct coincide. The $\mathcal W_T$-reduct eliminates quantifiers by Proposition \ref{Prop W_Tis S_T}; so does the full binary reduct.

\smallskip
(iii) is proved in Theorem \ref{thm wqom definable order} and (iv) follows easily from Theorem \ref{thm wqom definable order} after noticing that $f(x)\leqslant f(y)$ defines a total preorder on $\Mon$. 
\end{proof}

\section{Concluding remarks}

First we note one more interesting consequence of weak monotonicity. Ramakrishnan and Steinhorn in \cite{RS} proved that a quasi-decisive, totally ordered $\aleph_0$-saturated structure $\mathcal M=(M,<,\dots)$, extends partial orders: every definable partial order on a definable subset of $M^n$ extends definably to a total order; $\mathcal M$ is quasi-decisive if:
for all $L_M$-definable subsets $B,C\subseteq M$ and types $p\in S_1(T)$ having realizations in $M$ coinitial  in $B$ ($B\cap p(M)$ has no strict lower bound in $B$),  there is an initial part of $B\cap p(M)$  that is either contained in or disjoint from $C$.

They showed that weakly o-minimal  and quasi-o-minimal structures are quasi-decisive. It is easy to see that weakly monotone structures are quasi-decisive, too. 

\begin{Corollary} 
$\aleph_0$-saturated weakly monotone structures extend partial orders. 
\end{Corollary}
\begin{proof}
Suppose that $\mathcal M=(M,<,\ldots)$ is $\aleph_0$-saturated and weakly monotone. It suffices to prove that $\mathcal M$ is quasi-decisive. Fix $L_M$-definable sets $B,C\subseteq M$ and $p\in S_1(T)$ having realizations in $M$ coinitial in $B$ (in fact, we only need that $B$ meets $p(M)$).
By Proposition \ref{prop_WQOM_characterization}(i), $B$ has finitely many convex components on $p(M)$; denote by $B_0$ the leftmost convex component of $B$ on $p(M)$. Also, $C$ and its complement determine a finite convex partition of $p(M)$, hence of its convex subset $B_0$ too. Let $B_0'\subseteq B_0$ be the leftmost member of this partition. Clearly $B_0'$ is an initial part of $B_0$, hence of $B\cap p(M)$ as well, but also $B_0'$ is either contained in or disjoint from $C$. Hence  $\mathcal M$ is quasi-decisive.
\end{proof}

We finish by stating some questions. The first is motivated by Corollary \ref{Corollary vece manje}, where we proved that the $\{<,>\}$-monotone reduct eliminates quantifiers. Call a theory $T$ {\it monotone with respect to $<$} if  for all (finite) sets $A\subset\Mon$  the full  binary reduct coincides with the $\{<,>\}$-reduct (both of them viewed as $T_A$-reducts); $T$ is {\it monotone} if it so with respect to some definable linear order. It sounds reasonable that monotone theories are close to quasi-o-minimal ones.

\begin{Question}\label{Question 8 2}
What would be an alternative description of monotone theories?
\end{Question} 

The second question is motivated by Remark \ref{rmk naming parameters and qwom}. There we noticed that weak monotonicity is preserved under naming  parameters, but is not necessarily preserved by dismissing them. The example for the latter was a dense linear order with an equivalence relation with two dense classes. Each class is $acl^{eq}(\emptyset)$-definable and undefinable; after naming a single element, each class becomes definable, and the theory becomes weakly monotone. 

\begin{Question}
If $<$ is a definable linear order, $A\subseteq \Mon$, $T_A$ is weakly monotone, and 
$acl^{eq}(\emptyset)=dcl^{eq}(\emptyset)$, must $T$ be weakly monotone, too?
\end{Question}

In this paper we gave a description of definable linear orders on $\Mon$ in weakly monotone theories. 
Onshuus and Steinhorn in \cite{OS} proved that definable linear orders on $\Mon^n$  definably piecewise-embed into lexicographic orders if the underlying theory is o-minimal with elimination of imaginaries.
This property is unlikely shared by general weakly o-minimal theories, but maybe it is so by binary ones; recall that a complete theory $T$ is {\it binary} if every formula is equivalent modulo $T$ to a Boolean combination of formulae in at most two free variables.
\begin{Question}
Is there a similar description, involving lexicographic orders and orders from $\mathcal W_T$ say, of linear orders that are interpretable in a binary weakly monotone theory?   
\end{Question}
Such a description would be quite interesting even in the case of colored orders.


\begin{thebibliography}{99}

\bibitem{BPW} Belegradek, O., Peterzil, Y., Wagner, F.: Quasi-o-minimal structures. J. Symbolic Logic. 65(3), 1115--1132 (2000)

\bibitem{Kud} Kudaibergenov, K. Zh.: Weakly quasi-o-minimal models. Siberian Adv. Math. 20(4), 285--292 (2010)

\bibitem{Mac} Macpherson, D., Marker, D., Steinhorn, C.: Weakly o-minimal structures and real closed fields. Trans. Amer. Math. Soc. 352(12), 5435--5483 (2000)

\bibitem{MRS} Mekler, A., Rubin, M., Steinhorn, C.: Dedekind completeness and the algebraic complexity of o-minimal structures. Canad. J. Math. 44(4), 843--855 (1992)

\bibitem{MoT} Moconja, S., Tanovi\'c, P.: Monotone theories. arXiv preprint. arXiv:1811.05228v3 (2019)

\bibitem{MT2} Moconja, S., Tanovi\'c, P.: Stationarily ordered types and the number of countable models. Ann. Pure Appl. Logic. 171(3), 102765 (2020). https://doi.org/10.1016/j.apal.2019.102765

\bibitem{OS} Onshuus, A., Steinhorn, C.: On linearly ordered structures of finite rank. J. Math. Log. 9(2), 201--239 (2009)

\bibitem{RS} Ramakrishnan, J., Steinhorn, C.: Definably extending partial orders in totally ordered structures. MLQ Math. Log. Q. 60(3), 205--210 (2014)

\bibitem{Simon} Simon, P.: On dp-minimal ordered structures. J. Symbolic Logic. 76(2), 448--460 (2011). 

\bibitem{IMT} Tanovi\'c, P., Moconja, S., Ili\'c D.: Around Rubin’s “Theories of linear order”. Accepted in J. Symbolic Logic. (2020). https://doi.org/10.1017/jsl.2020.68

\end{thebibliography}

\end{document}